\newtheorem{Proposition}[definition]{Proposition}
\newtheorem{observation}[theorem]{Observation}
\newcommand{\boxli}{\pi}
\newcommand{\boxlistar}{\boxli^{\star}}
\newcommand{\boxi}{\operatorname{boxicity}}
\newcommand{\leftend}{l}
\newcommand{\rightend}{r}
\newcommand{\floor}[1]{\left\lfloor #1 \right\rfloor}
\newcommand{\ceil}[1]{\left\lceil #1 \right\rceil}
\newcommand{\ilog}{\operatorname{log^{\star}}}
\newcommand{\order}[1]{O\left( #1 \right)}
\newcommand{\orderatleast}[1]{\Omega\left( #1 \right)}
\def\tends{\rightarrow}
\def\into{\rightarrow}
\def\N{\mathbb{N}}
\def\R{\mathbb{R}}
\def\G{\mathcal{G}}
\def\F{\mathcal{F}}
\def\E{\mathcal{E}}
\begin{document}

\title{Boxicity and separation dimension}
\author{Manu~Basavaraju\inst{1} \and L. Sunil Chandran\inst{2} \and Martin~Charles~Golumbic\inst{3} \and  Rogers Mathew\inst{3}
 \and Deepak~Rajendraprasad\inst{3}
}
\institute{University of Bergen, Postboks 7800, NO-5020 Bergen. \\
	\email{manu.basavaraju@ii.uib.no}
\and
	Department of Computer Science and Automation, \\ 
	Indian Institute of Science, Bangalore -- 560012, India.\\
	\email{sunil@csa.iisc.ernet.in}
\and
	Department of Computer Science, Caesarea Rothschild Institute, \\
	University of Haifa, 31905 Haifa, Israel. \\
	\email{golumbic@cs.haifa.ac.il, \{rogersmathew, deepakmail\}@gmail.com}
}
\maketitle
\begin{abstract}
A family $\F$ of permutations of the vertices of a hypergraph $H$ is called {\em pairwise suitable} for $H$ if, for every pair of disjoint edges in $H$, there exists a permutation in $\F$ in which all the vertices in one edge precede those in the other. The cardinality of a smallest such family of permutations for $H$ is called the {\em separation dimension} of $H$ and is denoted by $\boxli(H)$. Equivalently, $\boxli(H)$ is the smallest natural number $k$ so that the vertices of $H$ can be embedded in $\R^k$ such that any two disjoint edges of $H$ can be separated by a hyperplane normal to one of the axes. We show that the separation dimension of a hypergraph $H$ is equal to the {\em boxicity} of the line graph of $H$. This connection helps us in borrowing results and techniques from the extensive literature on boxicity to study the concept of separation dimension. 


\vspace{1ex}
\noindent\textbf{Keywords:} separation dimension, boxicity, scrambling permutation, line graph, acyclic chromatic number.
\end{abstract}


\section{Introduction}

Let $\sigma:U \into [n]$ be a permutation of  elements of an $n$-set $U$. For two disjoint subsets $A,B$ of $U$, we say  $A \prec_{\sigma} B$ when every element of $A$ precedes every element of $B$ in $\sigma$, i.e., $\sigma(a) < \sigma(b), \forall (a,b) \in A \times B$. Otherwise, we say $A \nprec_{\sigma} B$. We say that $\sigma$ {\em separates} $A$ and $B$ if either $A \prec_{\sigma} B$ or $B \prec_{\sigma} A$. We use $a \prec_{\sigma} b$ to denote $\{a\} \prec_{\sigma} \{b\}$. For two subsets $A, B$ of $U$, we say $A \preceq_{\sigma} B$ when $A \setminus B \prec_{\sigma} A\cap B \prec_{\sigma} B \setminus A$. 

In this paper, we introduce and study a notion called \emph{pairwise suitable family of permutations} for a hypergraph $H$ and the \emph{separation dimension} of $H$.

\begin{definition}
\label{definitionPairwiseSuitable}
A family $\F$ of permutations of $V(H)$ is \emph{pairwise suitable} for a hypergraph $H$ if, for every two disjoint edges $e,f \in E(H)$, there exists a permutation $\sigma \in \F$  which separates $e$ and $f$. The cardinality of a smallest family of permutations that is pairwise suitable for $H$ is called the {\em separation dimension} of $H$ and is denoted by $\boxli(H)$. 
\end{definition}

A family $\F = \{\sigma_1, \ldots, \sigma_k\}$ of permutations of a set $V$ can be seen as an embedding of $V$ into $\R^k$ with the $i$-th coordinate of $v \in V$ being the rank of $v$ in the $\sigma_i$. Similarly, given any embedding of $V$ in $\R^k$, we can construct $k$ permutations by projecting the points onto each of the $k$ axes and then reading them along the axis, breaking the ties arbitrarily. From this, it is easy to see that $\boxli(H)$ is the smallest natural number $k$ so that the vertices of $H$ can be embedded into $\R^k$ such that any two disjoint edges of $H$ can be separated by a hyperplane normal to one of the axes. This motivates us to call such an embedding a {\em separating embedding} of $H$ and $\boxli(H)$ the {\em separation dimension} of $H$.

The study of similar families of permutations dates back to the work of Ben Dushnik in 1950 where he introduced the notion of \emph{$k$-suitability} \cite{dushnik}. A family $\F$ of permutations of $[n]$ is \emph{$k$-suitable} if, for every $k$-set $A \subseteq [n]$ and for every $a \in A$, there exists a $\sigma \in \mathcal{F}$ such that $A \preceq_{\sigma} \{a\}$. Let $N(n,k)$ denote the cardinality of a smallest family of permutations that is $k$-suitable for $[n]$. In 1972, Spencer \cite{scramble} proved that $\log \log n \leq N(n,3) \leq N(n,k) \leq k2^k\log \log n$. He also showed that $N(n,3) < \log \log n + \frac{1}{2} \log \log \log n + \log (\sqrt{2}\boxli) + o(1)$. Fishburn and Trotter, in 1992, defined the \emph{dimension} of a hypergraph on the vertex set $[n]$ to be the minimum size of a family $\F$ of permutations of $[n]$ such that every edge of the hypergraph is an intersection of \emph{initial segments} of $\F$ \cite{FishburnTrotter1992}. It is easy to see that an edge $e$ is an intersection of initial segments of $\F$ if and only if for every $v \in [n] \setminus e$, there exists a permutation $\sigma \in \F$ such that $e \prec_{\sigma} \{v\}$. F\"{u}redi, in 1996, studied the notion of \emph{$3$-mixing} family of permutations \cite{Furedi1996}. A family $\F$ of permutations of $[n]$ is called $3$-mixing if for every $3$-set $\{a, b, c\} \subseteq [n]$ and a designated element $a$ in that set, one of the permutations in $\F$ places the element $a$ between $b$ and $c$. It is clear that $a$ is between $b$ and $c$ in a permutation $\sigma$ if and only if $\{a,b\} \preceq_{\sigma} \{a,c\}$ or $\{a.c\} \preceq_{\sigma} \{a,b\}$. Such families of permutations with small sizes have found applications in showing upper bounds for many combinatorial parameters like poset dimension \cite{kierstead1996order}, product dimension \cite{FurediPrague}, boxicity \cite{RogSunSiv} etc.

The notion of separation dimension introduced here seems so natural but, to the best of our knowledge, has not been studied in this generality before. The authors of \cite{chee2013sequence} provide suggested applications motivating the study of permutation  covering and separation problems on event sequencing of tasks. Apart from that, a major motivation for us to study this notion of separation is its interesting connection with a certain well studied geometric representation of graphs.  In fact, we show that $\boxli(H)$ is same as the \emph{boxicity} of the intersection graph of the edge set of $H$, i.e., the line graph of $H$.
  
An axis-parallel $k$-dimensional box or a \emph{$k$-box} is  a Cartesian  product $R_1 \times  \cdots \times R_k$, where each  $R_i$ is a closed interval on the real line. For example, a line segment lying parallel to the $X$ axis is a $1$-box, a rectangle with its sides parallel to the $X$ and $Y$ axes is a $2$-box, a rectangular cuboid with its sides parallel to the $X$, $Y$, and $Z$ axes is a $3$-box and so on. A {\em box representation} of a graph $G$ is a geometric representation of $G$ using axis-parallel boxes as follows. 

\begin{definition}
\label{definitionBoxicity}
The \emph{$k$-box representation} of a graph $G$ is a function $f$ that maps each vertex in $G$ to a $k$-box in $\mathbb{R}^k$ such that, for all vertices $u,v$ in $G$, the pair $\{u,v\}$ is an edge if and only if $f(u)$ intersects $f(v)$. The \emph{boxicity} of a graph $G$, denoted by $\boxi(G)$, is the minimum positive integer $k$ such that $G$ has a $k$-box representation. 
\end{definition}

Box representation is a generalisation of interval representation of \emph{interval graphs} (intersection graphs of closed intervals on the real line). From the definition of boxicity, it is easy to see that interval graphs are precisely the graphs with boxicity $1$.  The concept of boxicity was introduced by F.S. Roberts in 1969 \cite{Roberts}. He showed that every graph on $n$ vertices has an $\floor{ n/2  }$-box representation. The $n$-vertex graph whose complement is a perfect matching is an example of a graph whose boxicity is equal to $n/2$. Upper bounds for boxicity in terms of other graph parameters like maximum degree, treewidth, minimum vertex cover, degeneracy etc. are available in literature. Adiga, Bhowmick, and Chandran showed that the boxicity of a graph with maximum degree $\Delta$ is $O(\Delta \log^2 \Delta)$ \cite{DiptAdiga}. Chandran and Sivadasan proved that boxicity of a graph with treewidth $t$ is at most $t+2$ \cite{CN05}. It was shown by Adiga, Chandran and Mathew that the boxicity of a $k$-degenerate graph on $n$ vertices is $O(k \log n)$ \cite{RogSunAbh}. Boxicity is also studied in relation with other dimensional parameters of graphs like partial order dimension and threshold dimension \cite{DiptAdiga,Yan1}. Studies on box representations of special graph classes too are available in abundance. Scheinerman showed that every outerplanar graph has a $2$-box representation \cite{Scheiner} while Thomassen showed that every planar graph has a $3$-box representation \cite{Thoma1}. Results on boxicity of series-parallel graphs \cite{CRB1}, Halin graphs \cite{halinbox}, chordal graphs, AT-free graphs, permutation graphs \cite{CN05}, circular arc graphs \cite{Dipt}, chordal bipartite graphs \cite{SunMatRog} etc. can be found in literature. Here we are interested in boxicity of the line graph of hypergraphs.

\begin{definition}
\label{definitionLineGraph}
The \emph{line graph} of a hypergraph $H$, denoted by $L(H)$, is the graph with vertex set $V(L(H)) = E(H)$ and edge set $E(L(H)) = \{\{e,f\} : e, f \in E(H), e \cap f \neq \emptyset \}$.
\end{definition}
  
For the line graph of a graph $G$ with maximum degree $\Delta$, it was shown by Chandran, Mathew and Sivadasan that its boxicity is $\order{\Delta \log\log \Delta}$ \cite{RogSunSiv}. It was in their attempt to improve this result that the authors stumbled upon pairwise suitable family of permutations and its relation with the boxicity of the line graph of $G$. In an arxiv preprint version of this paper available at \cite{sepDim}, we improve the upper bound for boxicity of the line graph of $G$ to $2^{9 \ilog \Delta} \Delta$, where $\ilog \Delta$ denotes the iterated logarithm of $\Delta$ to the base $2$, i.e. the number of times the logarithm function (to the base $2$) has to be applied so that the result is less than or equal to $1$. Bounds for separation dimension of a graph based on its treewidth, degeneracy etc. are also established in this version. 

\subsection{Outline of the paper}

The remainder of this paper is organised as follows. A brief note on some standard terms and notations used throughout this paper is given in Section \ref{sectionNotation}. Section \ref{sectionConnection} demonstrates the equivalence of separation dimension of a hypergraph $H$ and boxicity of the line graph of $H$. In Section \ref{CharacterizingDimOneSection}, we characterize graphs of separation dimension $1$. Using a probabilistic argument, in Section \ref{subsection:SizeGraph}, we prove a tight (up to constants) upper bound for separation dimension of a graph based on its size. Section \ref{subsection:Acyclic} relates separation dimension with acyclic chromatic number. In Section \ref{subsection:planar}, using  Schnyder's celebrated result on planar drawing, we show that the separation dimension of a planar graph is at most $3$. This bound is the best possible as we know of series-parallel graphs (that are subclasses of planar graphs) of separation dimension $3$. In Section \ref{subsection:LowerBound}, we prove the theorem that yields a non-trivial lower bound to the separation dimension of a graph. This theorem and its corollaries are used in establishing the tightness of the upper bounds proved. Moreover, the theorem is used to prove a lower bound for the separation dimension of a random graph in Section \ref{sectionRandomGraphs}. 

Once again, in Section \ref{sectionBoxliSizeHyper}, we use a probabilistic argument to show an upper bound on the separation dimension of a rank-$r$ hypergraph based on its size. This is followed by an upper bound based on maximum degree in Section \ref{Section:HyperGraphMaxDeg}. We get this upper bound as a consequence of a non-trivial result in  the area of boxicity. In Section \ref{sectionLowerBoundHypergraphs}, we prove a lower bound on the separation dimension of a complete $r$-uniform hypergraph by extending the lower bounding technique used in the context of graphs. 
Finally, in Section \ref{sectionOpenProblems}, we conclude with a discussion of a few open problems that we find interesting.



\subsection{Notational note}
\label{sectionNotation}

A {\em hypergraph} $H$ is a pair $(V, E)$ where $V$, called the {\em vertex set}, is any set and $E$, called the {\em edge set}, is a collection of subsets of $V$. The vertex set and edge set of a hypergraph $H$ are denoted respectively by $V(H)$ and $E(H)$. The {\em rank} of a hypergraph $H$ is $\max_{e \in E(H)}|e|$ and $H$ is called {\em $k$-uniform} if $|e| = k, \forall e \in E(H)$. The {\em degree} of a vertex $v$ in $H$ is the number of edges of $H$ which contain $v$. The {\em maximum degree} of $H$, denoted as $\Delta(H)$ is the maximum degree over all vertices of $H$. All the hypergraphs considered in this paper are finite. 

A {\em graph} is a $2$-uniform hypergraph. For a graph $G$ and any $S \subseteq V(G)$, the subgraph of $G$ induced by the vertex set $S$ is denoted by $G[S]$. For any $v \in V(G)$, we use $N_G(v)$ to denote the neighbourhood of $v$ in $G$, i.e., $N_G(v) = \{u \in V(G) : \{v,u\} \in E(G)\}$.

A \emph{closed interval} on the real line, denoted as $[i,j]$ where $i,j \in \R$ and $i\leq j$, is the set $\{x\in \R : i\leq x\leq j\}$. Given an interval $X=[i,j]$, define $\leftend(X)=i$ and $\rightend(X)=j$. We say that the closed interval $X$ has \emph{left end-point} $\leftend(X)$ and \emph{right end-point} $\rightend(X)$. For any two intervals $[i_1, j_1], [i_2,j_2]$ on the real line, we say that $[i_1, j_1] < [i_2,j_2]$ if $j_1 < i_2$. 

For any finite positive integer $n$, we shall use $[n]$ to denote the set $\{1,\ldots , n\}$. A permutation of a finite set $V$ is a bijection from $V$ to $[|V|]$.
The logarithm of any positive real number $x$ to the base $2$ and $e$ are respectively denoted by $\log(x)$ and $\ln(x)$. 


\section{Pairwise suitable family of permutations and a box representation}
\label{sectionConnection}

In this section we show that a family of permutations of cardinality $k$ is pairwise suitable for a hypergraph $H$ (Definition \ref{definitionPairwiseSuitable}) if and only if the line graph of $H$ (Definition \ref{definitionLineGraph}) has a $k$-box representation (Definition \ref{definitionBoxicity}). Before we proceed to prove it, let us state an equivalent but more combinatorial definition for boxicity. 

We have already noted that interval graphs are precisely the graphs with boxicity $1$. Given a $k$-box representation of a graph $G$, orthogonally projecting the $k$-boxes to each of the $k$-axes in $\mathbb{R}^k$  gives $k$ families of intervals. Each one of these families can be thought of as an interval representation of some interval graph. Thus we get $k$ interval graphs. It is not difficult to observe that a pair of vertices is  adjacent in $G$ if and only if the pair is adjacent in each of the $k$ interval graphs obtained. The following lemma, due to Roberts \cite{Roberts}, formalises this relation between box representations and interval graphs.

\begin{lemma}[Roberts \cite{Roberts}]
\label{lemmaRoberts}
For every graph $G$, $\boxi(G) \leq k$ if and only if there exist $k$ interval graphs $I_1, \ldots, I_k$, with $V(I_1) = \cdots = V(I_k) = V(G)$ such that $G = I_1 \cap \cdots \cap I_k$.
\end{lemma}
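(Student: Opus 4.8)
The plan is to prove the two directions of the equivalence separately, in both cases exploiting the elementary observation that two axis-parallel boxes in $\R^k$ intersect if and only if their projections onto every one of the $k$ axes intersect. Concretely, writing a $k$-box as a Cartesian product $R_1 \times \cdots \times R_k$ of closed intervals, two boxes $R_1 \times \cdots \times R_k$ and $R_1' \times \cdots \times R_k'$ share a common point exactly when $R_i \cap R_i' \neq \emptyset$ for each $i \in [k]$. I would state and verify this coordinate-wise decomposition first, since every remaining step reduces to it.

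For the forward direction, assume $\boxi(G) \le k$ and fix a $k$-box representation $f$ of $G$, writing $f(v) = R_1(v) \times \cdots \times R_k(v)$. For each axis $i \in [k]$ I would define a graph $I_i$ on vertex set $V(G)$ by declaring $\{u,v\}$ an edge precisely when $R_i(u) \cap R_i(v) \neq \emptyset$; the family $\{R_i(v) : v \in V(G)\}$ is then an interval representation witnessing that each $I_i$ is an interval graph. By the coordinate-wise decomposition, $f(u) \cap f(v) \neq \emptyset$ if and only if $R_i(u) \cap R_i(v) \neq \emptyset$ for all $i$, so $\{u,v\} \in E(G)$ if and only if $\{u,v\}$ is an edge of every $I_i$, which is exactly the assertion $G = I_1 \cap \cdots \cap I_k$.

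For the reverse direction, assume such interval graphs $I_1, \ldots, I_k$ are given. Each $I_i$, being an interval graph on $V(G)$, admits an interval representation $v \mapsto R_i(v)$ with $\{u,v\} \in E(I_i)$ if and only if $R_i(u) \cap R_i(v) \neq \emptyset$. I would then assemble these into the map $f(v) = R_1(v) \times \cdots \times R_k(v)$ and again invoke the decomposition to conclude that $f(u) \cap f(v) \neq \emptyset$ if and only if $\{u,v\}$ is an edge of every $I_i$, i.e. if and only if $\{u,v\} \in E(G)$. Hence $f$ is a $k$-box representation and $\boxi(G) \le k$.

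I expect no serious obstacle here; the lemma is essentially a repackaging of the definitions once the box-intersection decomposition is in hand. The only points warranting a little care are purely definitional: ensuring all the $I_i$ share the vertex set $V(G)$ (which is hypothesised in the reverse direction and enforced by construction in the forward one), and being explicit that ``$G = I_1 \cap \cdots \cap I_k$'' is to be read as equality of edge sets over the common vertex set, so that adjacency in $G$ means simultaneous adjacency in all of the $I_i$.
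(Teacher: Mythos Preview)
Your proof is correct and follows exactly the approach the paper sketches informally in the paragraph preceding the lemma: project a $k$-box representation onto each axis to obtain $k$ interval graphs whose intersection is $G$, and conversely stack $k$ interval representations coordinate-wise to build a $k$-box representation. The paper itself does not give a formal proof (the lemma is attributed to Roberts and only the forward direction is outlined), but your write-up is precisely the standard argument it alludes to.
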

From the above lemma, we get an equivalent definition of boxicity. 

\begin{definition}
\label{definitionBoxicityInterval}
The \emph{boxicity} of a 
graph $G$ is the minimum positive integer $k$ for which there exist $k$ interval graphs
$I_1,\ldots, I_k$ such that $G = I_1 \cap \cdots \cap I_k$.  
\end{definition}

Note that if $G = I_1 \cap \cdots \cap I_k$, then each $I_i$ is a supergraph of $G$. Moreover, for every pair of vertices $u,v \in V(G)$ with $\{u,v\} \notin E(G)$, there exists some $i \in [k]$ such that $\{u,v\} \notin E(I_i)$. Now we are ready to prove the main theorem of this section.

\begin{theorem}
For a hypergraph $H$, $\boxli(H) = \boxi(L(H))$.  
\label{theoremConnectionBoxliPermutation}
\end{theorem}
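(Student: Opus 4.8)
The plan is to prove the two inequalities $\boxi(L(H)) \le \boxli(H)$ and $\boxli(H) \le \boxi(L(H))$ separately, passing freely between permutations of $V(H)$ and interval graphs on the vertex set $E(H)$ of $L(H)$, and using the combinatorial description of boxicity from Definition \ref{definitionBoxicityInterval}. The recurring dictionary entry is that, for two disjoint edges, \emph{being separated by a permutation} should correspond to \emph{having disjoint intervals}.

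For the inequality $\boxi(L(H)) \le \boxli(H)$, I would start from a pairwise suitable family $\F = \{\sigma_1, \ldots, \sigma_k\}$ and, for each $\sigma_j$, define an interval graph $I_j$ on vertex set $E(H)$ by assigning to each edge $e \in E(H)$ the interval $J_e^j = [\min_{v \in e}\sigma_j(v),\ \max_{v \in e}\sigma_j(v)]$. The key observation is that $J_e^j < J_f^j$ or $J_f^j < J_e^j$ exactly when $\sigma_j$ separates $e$ and $f$; so $\{e,f\}$ is a non-edge of $I_j$ if and only if $\sigma_j$ separates $e$ and $f$, which in particular forces $e \cap f = \emptyset$. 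I would then verify $L(H) = I_1 \cap \cdots \cap I_k$: when $e \cap f \ne \emptyset$, the interval of every $I_j$ contains the common rank $\sigma_j(v)$ for $v \in e \cap f$, so the edge survives in the intersection; when $e \cap f = \emptyset$, pairwise suitability produces some $\sigma_j$ separating $e,f$, so the edge is absent from $I_j$ and hence from the intersection. By Definition \ref{definitionBoxicityInterval} this gives $\boxi(L(H)) \le k$.

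For the reverse inequality $\boxli(H) \le \boxi(L(H))$, I would start from interval graphs $I_1, \ldots, I_k$ with $L(H) = I_1 \cap \cdots \cap I_k$ and extract one permutation $\sigma_j$ of $V(H)$ from each. The crucial structural fact is that each $I_j$ is a supergraph of $L(H)$: for every vertex $v \in V(H)$, all edges containing $v$ are pairwise intersecting in $H$, hence form a clique in $I_j$, so their intervals pairwise intersect. By the Helly property of intervals on the line, these intervals share a common point, and I would fix such a point $c_j(v) \in \bigcap_{e \ni v} J_e^j$ and define $\sigma_j$ by sorting $V(H)$ in increasing order of $c_j$, breaking ties arbitrarily (this is a genuine permutation). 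The verification is then immediate: if two disjoint edges $e,f$ satisfy $J_e^j < J_f^j$, then for every $u \in e$ and $w \in f$ we have $c_j(u) \le \rightend(J_e^j) < \leftend(J_f^j) \le c_j(w)$, whence $e \prec_{\sigma_j} f$. Finally, for any disjoint $e,f$ the equality $L(H) = \bigcap_j I_j$ forces $J_e^j, J_f^j$ to be disjoint for some $j$, and that $\sigma_j$ separates $e$ and $f$; so $\{\sigma_1,\ldots,\sigma_k\}$ is pairwise suitable and $\boxli(H) \le k$.

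I expect the reverse direction to be the main obstacle, and within it the decisive point is the Helly-property step. The forward direction is a routine unwinding of definitions, but going backward one must manufacture a single coordinate $c_j(v)$ for each vertex of $H$ out of the many intervals assigned to the edges through $v$; it is precisely the fact that $I_j \supseteq L(H)$ (so those intervals pairwise intersect) together with Helly's theorem that makes this coordinate well defined and lets an interval graph on $E(H)$ be \emph{read back} as a permutation of $V(H)$.
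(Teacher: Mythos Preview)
Your proposal is correct and follows essentially the same approach as the paper: the paper also builds intervals $[\min_{v\in e}\sigma_i(v),\max_{v\in e}\sigma_i(v)]$ from permutations for one direction, and for the other direction uses the Helly property on the clique $\{e : v \in e\}$ in each $I_i$ to produce a well-defined ordering of $V(H)$ (the paper keeps the whole intersection interval $C_i(v)$ rather than a single point $c_j(v)$, but this is immaterial). The two proofs differ only in the order in which the inequalities are presented.
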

\begin{proof}
First we show that $\boxli(H) \leq \boxi(L(H))$. Let $\boxi(L(H)) = b$. Then, by Lemma \ref{lemmaRoberts}, there exists a collection of $b$ interval graphs, say $\mathcal{I} = \{I_1, \ldots, I_b\}$, whose intersection is $L(H)$. For each $i \in [b]$, let $f_i$ be an interval representation of $I_i$. For each $u \in V(H)$, let $E_H(u) = \{e \in E(H) : u \in e\}$ be the set of edges of $H$ containing $u$. Consider an $i \in [b]$ and a vertex  $u \in V(H)$. The closed interval $C_i(u) = \bigcap_{e \in E_H(u)} f_i(e)$ is called the {\em clique region} of $u$ in $f_i$. Since any two edges in $E_H(u)$ are adjacent in $L(H)$, the corresponding intervals have non-empty intersection in $f_i$ . By the Helly property of intervals, $C_i(u)$ is non-empty. We define a permutation $\sigma_i$ of $V(H)$ from $f_i$ such that $\forall u,v \in V(G)$, $C_i(u) < C_i(v) \implies u \prec_{\sigma_i} v$. It suffices to prove that $\{\sigma_1, \ldots, \sigma_b\}$ is a family of permutations that is pairwise suitable for $H$.

Consider two disjoint edges $e, e'$ in $H$. Hence $\{e, e' \} \notin E(L(H))$ and since $L(H) = \bigcap_{i=1}^b I_i$, there exists an interval graph, say $I_i \in \mathcal{I}$, such that $\{e, e'\} \notin E(I_i)$, i.e., $f_i(e) \cap f_i(e') = \emptyset$. Without loss of generality, assume $f_i(e) < f_i(e')$. For any $v \in e$ and any $v' \in e'$, since $C_i(v) \subseteq f_i(e)$ and $C_i(v') \subseteq f(e')$, we have $C_i(v) < C_i(v')$, i.e. $v \prec_{\sigma_i} v'$. Hence $e \prec_{\sigma_i} e'$. Thus the family $\{ \sigma_1, \ldots, \sigma_b \}$ of permutations is pairwise suitable for $H$. 

Next we show that $\boxi(L(H)) \leq \boxli(H)$. Let $\boxli(H) = p$ and let $\F = \{\sigma_1, \ldots, \sigma_p\}$ be a pairwise suitable family of permutations for $H$. From each permutation $\sigma_i$, we shall construct an interval graph $I_i$ such that $L(H) = \bigcap_{i=1}^{p}I_i$. Then by Lemma \ref{lemmaRoberts}, $\boxi(L(H)) \leq \boxli(H)$. 

For a given $i \in [p]$, to each edge $e \in E(H)$, we associate the closed interval 
$$f_i(e) = \left[ \min_{v \in e}\sigma_i(v) ~,~ \max_{v \in e}\sigma_i(v) \right],$$ 
and let $I_i$ be the intersection graph of the intervals $f_i(e), e \in E(H)$. Let $e, e' \in V(L(H))$. If $e$ and  $e'$ are adjacent in $L(H)$, let $v \in e \cap e'$. Then $\sigma_i(v) \in f_i(e) \cap f_i(e'),~\forall i \in [p]$. Hence $e$ and $e'$ are adjacent in $I_i$ for every $i \in [p]$. If $e$ and $e'$ are not adjacent in $L(H)$, then there is a permutation $\sigma_i \in \F$ such that either $e \prec_{\sigma_i} e'$ or $e' \prec_{\sigma_i} e$. Hence by construction $f_i(e) \cap f_i(e') = \emptyset$ and so $e$ and $e'$ are not adjacent in $I_i$. This completes the proof.
\qed
\end{proof}

It is the discovery of this intriguing connection that aroused our interest in the study of pairwise suitable families 
of permutations. This immediately makes applicable every result in the area of boxicity to separation dimension. For 
 example, any hypergraph with $m$ edges can be separated in $\R^{\floor{m/2}}$; for every $m \in \N$, there exist 
 hypergraphs with $m$ edges which cannot be separated in any proper subspace of $\R^{\floor{m/2}}$; every hypergraph 
 whose line graph is planar can be separated in $\R^3$; every hypergraph whose line graph has a treewidth at most $t$ 
 can be separated in $\R^{t+2}$; hypergraphs separable in $\R^1$ are precisely those whose line graphs are interval 
 graphs and so on. Further, algorithmic and hardness results from boxicity carry over to separation dimension since 
 constructing the line graph of a hypergraph can be done in quadratic time. We just mention two of them. Deciding if 
 the separation dimension is at most $k$ is NP-Complete for every $k \geq 2$ \cite{Coz,Kratochvil} and unless NP = 
 ZPP, for any $\epsilon >0$, there does not exist a polynomial time algorithm to approximate the separation dimension 
 of a hypergraph within a factor of $m^{1/2 -\epsilon}$ where $m = |E(H)|$ \cite{DiptAdigaHardness} \footnote{A 
 recent  paper shows that the inapproximability factor can be improved to $m^{1 - \epsilon}$, which is essentially 
 tight \cite{chalermsookgraph}.}. In this work, we have tried to find bounds on the separation dimension of a 
 hypergraph in terms of natural invariants of the hypergraph like maximum degree, rank etc. The next two results are 
 for rank-$r$ hypergraphs. 
\section{Separation dimension of graphs}
\label{section:ResultsOnGraphs}
\subsection{Characterizing graphs of separation dimension $1$}
\label{CharacterizingDimOneSection}
Before we characterize graphs of separation dimension at most $1$, let us take note of this easy observation that follows directly from the definition. 

\begin{observation}
\label{observationPiMonotonicity}
$\boxli(G)$ is a monotone increasing property, i.e., $\boxli(G') \leq \boxli(G)$ for every subgraph $G'$ of $G$.  
\end{observation}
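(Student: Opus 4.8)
The plan is to take an optimal pairwise suitable family for $G$ and restrict each of its permutations to the vertex set of the subgraph $G'$, arguing that the resulting family remains pairwise suitable for $G'$.

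First I would let $k = \boxli(G)$ and fix a pairwise suitable family $\F = \{\sigma_1, \ldots, \sigma_k\}$ of permutations of $V(G)$ for $G$, guaranteed by Definition \ref{definitionPairwiseSuitable}. Since $G'$ is a subgraph of $G$, we have $V(G') \subseteq V(G)$ and $E(G') \subseteq E(G)$. For each $i \in [k]$, I would define $\sigma_i'$ to be the permutation of $V(G')$ obtained by restricting $\sigma_i$ to $V(G')$; that is, $\sigma_i'$ lists the vertices of $V(G')$ in the same relative order in which they appear in $\sigma_i$. The key point is that for any two vertices $u, v \in V(G')$ we have $u \prec_{\sigma_i} v$ if and only if $u \prec_{\sigma_i'} v$, so the restriction preserves the relative order of every subset of $V(G')$.

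Next I would verify that $\F' = \{\sigma_1', \ldots, \sigma_k'\}$ is pairwise suitable for $G'$. Consider two disjoint edges $e, f \in E(G')$. Since $E(G') \subseteq E(G)$, the edges $e$ and $f$ are also disjoint edges of $G$, so by pairwise suitability of $\F$ there exists some $\sigma_i \in \F$ that separates them, say $e \prec_{\sigma_i} f$. Because every vertex of $e \cup f$ lies in $V(G')$, the relative order of these vertices is unchanged under restriction, whence $e \prec_{\sigma_i'} f$ and $\sigma_i'$ separates $e$ and $f$ in $G'$. Thus $\F'$ is pairwise suitable for $G'$, and since $|\F'| \leq |\F| = k$ (restricting distinct permutations may produce coincidences, which only helps), we conclude $\boxli(G') \leq k = \boxli(G)$.

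There is no genuine obstacle here; the only point requiring a moment of care is the observation that the separation of a pair of edges depends solely on the relative order among the vertices of those two edges, so discarding vertices outside $V(G')$ cannot destroy a separation that involves only vertices of $G'$. I note also that the same argument applies verbatim when $G$ and $G'$ are arbitrary hypergraphs, so the monotonicity in fact holds in that generality as well.
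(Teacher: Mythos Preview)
Your argument is correct and is exactly the intended justification; the paper itself offers no proof, simply noting that the observation ``follows directly from the definition,'' and your restriction-of-permutations argument is the natural way to unpack that remark.
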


``When is $\pi(G)=0$?''  Clearly, if $\boxli(G) = 0$, then $G$ may have at most one non-trivial connected component and every pair of edges must share an endpoint.  The following is a simple exercise answering the question:

\begin{Proposition}
   For a graph $G$, we have $\pi(G)=0$ if and only if $G$ is either a star or a triangle
   plus an unlimited number of isolated vertices.
\end{Proposition}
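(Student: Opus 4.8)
The plan is to first reinterpret the hypothesis $\boxli(G)=0$ combinatorially. Since a pairwise suitable family of cardinality $k$ corresponds to an embedding of $V(G)$ into $\R^k$, the equality $\boxli(G)=0$ means the empty family is already pairwise suitable: the point space $\R^0$ offers no axis along which to separate anything, so the condition can only be met when there is no pair of disjoint edges to begin with. Hence $\boxli(G)=0$ if and only if every two edges of $G$ share a common endpoint, equivalently $G$ has no matching of size two. The proposition then reduces to the purely combinatorial claim that a graph in which every two edges intersect is a star or a triangle, together with an arbitrary number of isolated vertices.

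The ``if'' direction is immediate. In a star every edge passes through the centre, and in a triangle any two of the three edges share a vertex; adjoining isolated vertices introduces no new edges. In either case no two edges are disjoint, so $\boxli(G)=0$.

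For the ``only if'' direction I would proceed as follows. First, two edges lying in different connected components are automatically disjoint, so all edges of $G$ must lie in a single component, and $G$ is that component plus isolated vertices. Fix an edge $e_1=\{a,b\}$; every other edge must meet $e_1$ and hence contains $a$ or $b$. If \emph{every} edge contains $a$ (or every edge contains $b$), then $G$ is a star and we are done. Otherwise there exist an edge $e_2$ avoiding $a$ and an edge $e_3$ avoiding $b$; meeting $e_1$ forces $e_2=\{b,c\}$ and $e_3=\{a,d\}$ with $c\neq a$ and $d\neq b$. Since $e_2$ and $e_3$ must themselves intersect, the only option consistent with $c\neq a$ and $d\neq b$ is $c=d$, producing the triangle on $\{a,b,c\}$.

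The remaining and slightly more delicate step, which I expect to be the main obstacle, is to verify that once this triangle is present no further edge can exist. I would take a hypothetical extra edge $e_4=\{x,y\}$ with $x\notin\{a,b,c\}$; meeting each of $\{a,b\}$, $\{b,c\}$, $\{a,c\}$ through its other endpoint would force $y$ to lie in all three edges simultaneously, i.e.\ $y\in\{a,b\}\cap\{b,c\}\cap\{a,c\}=\emptyset$, a contradiction. Thus $e_4\subseteq\{a,b,c\}$ and is one of the three triangle edges, so $G$ is exactly the triangle plus isolated vertices. The care required here is entirely in the intersection bookkeeping: the case split into ``all edges through one vertex'' versus ``not'', and then the exhaustive check that the non-star case admits nothing beyond a single triangle.
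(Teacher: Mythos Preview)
Your argument is correct. The paper itself does not supply a proof of this proposition; it explicitly labels it ``a simple exercise'' and states it without justification, so there is nothing to compare against beyond noting that your reduction to the combinatorial statement ``$\boxli(G)=0$ iff $G$ has no two disjoint edges'' matches the paper's one-sentence remark preceding the proposition. Your case analysis for the classification of graphs with pairwise intersecting edge sets is clean and complete.
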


A \emph{caterpillar} is a tree consisting of a chordless path $[v_1,v_2,\ldots,v_k]$
called the \emph{spine}, plus an unlimited number of pendant vertices.
A \emph{caterpillar with single humps} is formed from a caterpillar by adding at most one
new vertex $x_i$ adjacent to $v_i$ and $v_{i+1}$ for every $i=1,\ldots, k-1$.
Without loss of generality, we may assume that the first and last vertex of the spine have
no pendent vertices (i.e., the spine is longest possible.)
The \emph{diamond}, denoted here by $D$, is the graph with 4 vertices and 5 edges;
the 3-net $N_3$ consists of a triangle with a pendant vertex attached to each of its vertices; the graph $T_2$ is the tree with 6 edges $\{cx,cy,cz,xx',yy',zz'\}$; and the graph $C_k$ ($k\geq 4$) denotes the cycle of size $k$.

\begin{theorem}
\label{prop:caterpillar}
   Let $G$ be a graph. The following conditions are equivalent:
   \begin{enumerate}
     \item [(i)] $\pi(G) \leq 1$,
     \item [(ii)] $G$ is a disjoint union of caterpillars with single humps, 
     \item [(iii)] $G$ has no partial subgraph $C_k$ ($k\geq 4$), $N_3$ or $T_2$,
     \item [(iv)] $G$ is a chordal graph with no induced subgraph
            $D$, $K_4$, $T_2$, $N_3$, $G_1$, $G_2$ or $G_3$, where $G_1 = T_2 \cup \{cx'\}$, $G_2 = G_1 \cup \{cy'\}$ and $G_3 = G_2 \cup \{cz'\}$,
     \item [(v)] The line graph $L(G)$ is an interval graph.
   \end{enumerate}
\end{theorem}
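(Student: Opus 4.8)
The plan is to establish the cyclic chain $(ii)\Rightarrow(i)\Rightarrow(iii)\Rightarrow(ii)$ together with the two side equivalences $(i)\Leftrightarrow(v)$ and $(iii)\Leftrightarrow(iv)$; together these yield the equivalence of all five conditions. The entire argument is anchored on Theorem~\ref{theoremConnectionBoxliPermutation}, which lets me pass freely between the permutation picture ($\pi(G)\le 1$) and the geometric picture ($L(G)$ is an interval graph). For $(i)\Leftrightarrow(v)$ itself: since $\pi(G)=\boxi(L(G))$ and interval graphs are precisely the graphs of boxicity~$1$, we get $\pi(G)\le 1$ iff $L(G)$ is an interval graph; the only degenerate case is $\pi(G)=0$, where $G$ has no two disjoint edges and $L(G)$ is complete, hence trivially interval.

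For $(i)\Rightarrow(iii)$ I would use Observation~\ref{observationPiMonotonicity}: since $\pi$ is monotone under taking subgraphs, it suffices to show that each obstruction $F\in\{C_k\ (k\ge4),\,N_3,\,T_2\}$ satisfies $\pi(F)\ge 2$, i.e.\ that $L(F)$ is not an interval graph. A direct computation gives $L(C_k)\cong C_k$ (an induced cycle of length $\ge 4$, hence non-interval), $L(T_2)\cong N_3$, and $L(N_3)$ equal to the $3$-sun; both $N_3$ and the $3$-sun contain an asteroidal triple, formed respectively by their degree-one and degree-two vertices, and so neither is an interval graph. Hence any graph containing one of $C_k,N_3,T_2$ as a partial subgraph has $\pi\ge 2$, which is the contrapositive of $(i)\Rightarrow(iii)$.

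For $(ii)\Rightarrow(i)$ I would exhibit a single separating permutation explicitly: lay the spine out as $v_1<v_2<\cdots<v_k$, insert the pendant vertices of $v_i$ immediately after $v_i$, and place the hump $x_i$ between these and $v_{i+1}$. In the resulting linear order the position-interval of every edge is local, and a short finite case check over pendant, hump, and spine edges at consecutive spine positions shows that any two disjoint edges occupy disjoint position-intervals and are therefore separated; thus this one permutation is pairwise suitable and $\pi(G)\le 1$. The converse $(iii)\Rightarrow(ii)$ is where the real work lies, and I expect it to be the main obstacle. Forbidding $C_k\ (k\ge 4)$ as a partial subgraph forces every block of $G$ to be an edge or a triangle (in particular no two triangles share an edge); $N_3$-freeness then forces each triangle to have a degree-two apex, and $T_2$-freeness rules out three independent ``legs'' leaving any single vertex. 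The delicate part is to assemble these purely local constraints along the block--cut structure into the precise global shape: that each component is a path (the spine) carrying pendant vertices and at most one hump per spine edge, i.e.\ a caterpillar with single humps.

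Finally, $(iii)\Leftrightarrow(iv)$ is forbidden-subgraph bookkeeping. The implication $(iii)\Rightarrow(iv)$ is immediate once one notes that every induced obstruction of $(iv)$ contains a partial obstruction of $(iii)$: $D$ and $K_4$ each contain $C_4$, and $T_2\subseteq G_1\subseteq G_2\subseteq G_3$. For $(iv)\Rightarrow(iii)$ the key lemma is that a chordal graph with no induced $D$ or $K_4$ has no cycle of length $\ge 4$ even as a partial subgraph: taking a shortest such cycle, a chord either turns a $C_4$ into a diamond or yields a strictly shorter cycle of length $\ge 4$, while a chordless one violates chordality. Granting this, if $G$ contained $N_3$ or $T_2$ as a partial subgraph, then since no $C_4$, $D$, or $K_4$ may appear, every possible extra edge on the relevant six or seven vertices is ruled out apart from the edges $cx',cy',cz'$, so the induced subgraph on those vertices is exactly one of $N_3,T_2,G_1,G_2,G_3$ --- all forbidden by $(iv)$, a contradiction.
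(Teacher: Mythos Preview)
Your overall scheme---the cycle $(ii)\Rightarrow(i)\Rightarrow(iii)\Rightarrow(ii)$ together with the two side equivalences---is exactly the paper's, and your treatments of $(i)\Leftrightarrow(v)$, $(ii)\Rightarrow(i)$, and $(i)\Rightarrow(iii)$ match it (the paper simply asserts $\pi=2$ for the three obstructions rather than computing their line graphs and exhibiting asteroidal triples, but the two arguments are interchangeable via $(i)\Leftrightarrow(v)$).

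The substantive difference is in $(iii)\Rightarrow(ii)$. The paper does \emph{not} argue through the block--cut tree. It observes that $(iii)$ makes $G$ chordal, picks a simplicial vertex $a$ (necessarily of degree at most $2$, since a simplicial vertex of degree $\ge 3$ forces a $K_4$ and hence a partial $C_4$), deletes it, and applies induction: $G\setminus\{a\}$ is a caterpillar with single humps, and a case analysis---on whether the neighbour(s) of $a$ are spine, hump, or pendant vertices, invoking $N_3$- and $T_2$-freeness at the key moments---shows how to reattach $a$ while preserving the caterpillar-with-humps structure. This inductive approach is what buys the paper its linear-time recognition algorithm stated right after the theorem: run LexBFS or MCS to obtain an elimination order and then rebuild the caterpillar by reversing the deletions. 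Your block-structure route is a legitimate alternative and the local observations you list are correct, but you would still owe a comparable case analysis to thread the blocks into a single spine, and you would not get the algorithm for free.

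One genuine gap in your $(iv)\Rightarrow(iii)$ sketch: in the $T_2$ configuration, an extra edge among $\{x,y,z\}$, say $xy$, does \emph{not} create any cycle of length $\ge 4$; the only cycle in $T_2\cup\{xy\}$ is the triangle $cxy$. So your appeal to ``no $C_4$, $D$, $K_4$'' does not exclude it. What actually rules out $xy$ is that $T_2\cup\{xy\}$ contains an induced $N_3$ on $\{c,x,y,z,x',y'\}$. Hence you must first treat the partial-$N_3$ case (there your $C_4$ argument does suffice to force the six-vertex induced subgraph to be exactly $N_3$), and only then invoke ``no induced $N_3$'' in the $T_2$ analysis to eliminate $xy,xz,yz$ before concluding that the remaining possibilities are $T_2,G_1,G_2,G_3$.
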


\begin{proof} $(i)\Rightarrow(iii)$.
    The graphs $C_k$ ($k\geq 4$), $N_3$ and $T_2$ all have separation dimension 2,
    so any graph $G$ containing one of these graphs as partial subgraphs would have $\pi(G)\geq 2$ due to Observation~\ref{observationPiMonotonicity}.
    Note that each of these is a minimal forbidden subgraph, also they are all needed in the characterization.

 $(iii)\Rightarrow(ii)$. 
 Assume that $G$ is a connected graph; otherwise we can apply the same arguments to each of its components. Suppose $G$ has no partial subgraphs $C_k$ ($k\geq 4$), $N_3$ or $T_2$,
 so, in particular, $G$ is a chordal graph and has a symplicial vertex, say $a$,
 whose neighborhood is a clique. If the degree of $a$ is greater than or equal to 3, 
 then $G$ would have a clique of size 4 which has $C_4$ as a partial subgraph.
 So the degree of $a$ must be 1 or 2.
 
 We proceed by induction: assuming that $G / \{a\}$ is a caterpillar with single humps,
 having spine $v_1, v_2,\ldots,v_k$ such that neither $v_1$ nor $v_k$ has a pendent
 vertex. 
 
 \emph{Case 1.} Suppose $deg(a)=1$ in $G$, and let $b$ be the neighbor of $a$.  
  If $b$ is on the spine, then $a$ is a pendent to $b$ and we are done.
 
  If $b$ is a hump vertex adjacent to say $v_i$ and $v_{i+1}$,
    then the edge $ab$ implies that either $i=1$ or $i+1=k$, since $G$ has no $N_3$.
    If $i=1$ then $ab,bv_{i+1}$ moves to the spine replacing $v_i v_{i+1}$,
        making $v_i$ a hump vertex.
    If $i+1=k$ then $v_i b, ba$ moves to the spine replacing $v_i v_{i+1}$,
       making $v_{i+1}$ a hump vertex.
       
  Finally, if $b$ is a pendent vertex, the fact that $G$ has no partial subgraph $T_2$ proves the following claim:

 \emph{ Claim 1:  If $b$ is a pendent vertex adjacent to $v_j$ 
    and $a$ is adjacent to $b$, then (i) at least one of $v_{j-1}$ or $v_{j+1}$
    is an endpoint of the spine, and (ii) if one of $v_{j-1} v_j$ or $v_j v_{j+1}$
    has a hump vertex, then the other has no hump and is an end of the spine.}

 Without loss of generality, assume that $v_{j-1} v_j$ is an end of the spine (i.e., $j=2$)
 and has no hump vertex, the case of $v_j v_{j+1}$ being similar. 
 Then, $ab,bv_{j}$ moves to the spine replacing $v_{j-1} v_j$
 which becomes a pendant edge.

 \emph{Case 2.} Suppose $deg(a)=2$ in $G$, and let $b,c$ be the neighbors of $a$.  
 Since $a$ is simplicial, $\{a,b,c\}$ form a triangle.  
 
 If $b$ is a hump vertex, then $c$ is on the spine and $\{b,c,v_i\}$ form a triangle with another spine vertex $v_i$; but then $\{a,b,c,v_i\}$ would form a diamond $D$ contradicting the assumption that $G$ has no partial subgraph $C_4$.
 
 If $b$ is a pendant vertex, then $c$ is on the spine (say, $c=v_{j}$)
 and has spine neighbors $v_{j-1}$ and $v_{j+1}$. 
 By Claim 1, we may assume without loss of generality, that $v_{j-1} v_j$ is an end of the spine and has no hump vertex, the case of $v_j v_{j+1}$ being similar. 
 Then, $av_{j}$ moves to the spine replacing $v_{j-1} v_j$,
 making $b$ a single hump and $v_{j-1}$ a pendant vertex.
  
 Finally, if $bc$ is on the spine, then $a$ will be a single hump, for otherwise, if $bc$ already had a hump, then together with $a$ would form a diamond $D$ contradicting the assumption that $G$ has no partial subgraph $C_4$. 
 
 Hence, we prove that $G$ is a caterpillar with single humps. 
 
  $(ii)\Rightarrow(i)$. An edge separating permutation $\sigma$ for a caterpillar with single    humps can be obtained by listing the vertices on the spine $v_1, v_2,\ldots,v_k$
  as they appear from left to right and inserting immediately after $v_i$ all of its
  pendant neighbors followed by the common hump neighbor with $v_{i+1}$ if it exists.
  
  $(i)\Leftrightarrow(v)$ follows from Theorem \ref{theoremConnectionBoxliPermutation}, and 
  $(iii)\Leftrightarrow(iv)$ is a straightforward exercise.

\qed \end{proof}

The proof of Theorem \ref{prop:caterpillar} suggests a linear time algorithm for
recognizing whether a graph $G$ has separation dimension 1 and constructing its 
representation as a caterpillar with single humps: 
    (1) Using either Lexicographic Breadth First Search or Maximum Cardinality Search,
        obtain an ordering of the vertices $a_1, a_2,\ldots,a_n$ (but do not bother to
        test whether it is a perfect elimination ordering\footnote{If $G$ is chordal, 
         any LexBFS or MCS ordering will be a perfect elimination ordering, but testing whether each $v_i$ has exactly one forward neighbor or two connected forward neighbors will be enough.};
    (2) Starting with $a_n$ and proceeding in reverse order, follow the rules in the proof 
        of $(iii)\Rightarrow(ii)$ to construct the spine, pendant vertices and the humps. 
    If either (1) or (2) fails, then $\pi(G) > 1$.




\subsection{Separation dimension and size of a graph}
\label{subsection:SizeGraph}

For graphs, sometimes we work with a notion of suitability that is stronger than the pairwise suitability of Definition \ref{definitionPairwiseSuitable}. This will facilitate easy proofs for some results to come later in this article.

\begin{definition}
\label{definition3Mixing}
For a graph $G$, a family $\F$ of permutations of $G$ is \emph{$3$-mixing} if, for every two adjacent edges $\{a,b\}, \{a,c\} \in E(G)$, there exists a permutation $\sigma \in \F$ such that either $b \prec_{\sigma} a \prec_{\sigma} c$ or $c \prec_{\sigma} a \prec_{\sigma} b$. 
\end{definition}

Notice that a family of permutations $\mathcal{F}$ of $V(G)$ is pairwise suitable and $3$-mixing for $G$ if, for every two edges $e,f \in E(G)$, there exists a permutation $\sigma \in \mathcal{F}$ such that either $e \preceq_{\sigma} f$ or $f \preceq_{\sigma} e$.  Let $\boxlistar(G)$ denote the cardinality of a smallest family of permutations that is pairwise suitable and $3$-mixing for $G$. From their definitions, $\boxli(G) \leq \boxlistar(G)$.

We begin with the following two straightforward observations. 

\begin{observation}
\label{observationPiStarMonotonicity}
$\boxlistar(G)$ is a monotone increasing property. 
\end{observation}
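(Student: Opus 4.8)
The plan is to establish that for every subgraph $G'$ of $G$ we have $\boxlistar(G') \leq \boxlistar(G)$, by restricting an optimal family for $G$ down to the vertex set of $G'$. I would start from the convenient reformulation noted just above the statement: a family $\F$ of permutations of $V(G)$ is pairwise suitable and $3$-mixing for $G$ precisely when, for every pair of edges $e,f \in E(G)$, some $\sigma \in \F$ satisfies $e \preceq_{\sigma} f$ or $f \preceq_{\sigma} e$. This single combined condition is what I would transport to the subgraph, rather than handling pairwise suitability and $3$-mixing separately.

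First I would fix an optimal family $\F = \{\sigma_1, \ldots, \sigma_k\}$ witnessing $k = \boxlistar(G)$, and for each $i$ define $\sigma_i'$ to be the permutation of $V(G')$ obtained by deleting the vertices of $V(G) \setminus V(G')$ from $\sigma_i$ and keeping the induced relative order. The crucial (and essentially the only) fact to verify is that such a restriction preserves every ordering relation among the surviving vertices: for all $u,v \in V(G')$ we have $u \prec_{\sigma_i} v$ if and only if $u \prec_{\sigma_i'} v$. Consequently, for any two subsets $A,B \subseteq V(G')$, the relation $A \preceq_{\sigma_i} B$ is equivalent to $A \preceq_{\sigma_i'} B$, since $\preceq_{\sigma}$ is defined purely through the $\prec_{\sigma}$-orderings of the finitely many vertices in $A \cup B$.

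Next I would invoke that $G'$ being a subgraph means $E(G') \subseteq E(G)$ and that the endpoints of every edge of $G'$ lie in $V(G')$. Taking any two edges $e,f \in E(G')$, they are also edges of $G$, so by the reformulation some $\sigma_i \in \F$ gives $e \preceq_{\sigma_i} f$ or $f \preceq_{\sigma_i} e$. Since $e \cup f \subseteq V(G')$, the previous step transfers this to $e \preceq_{\sigma_i'} f$ or $f \preceq_{\sigma_i'} e$. Hence $\F' = \{\sigma_1', \ldots, \sigma_k'\}$ is pairwise suitable and $3$-mixing for $G'$, which yields $\boxlistar(G') \leq k = \boxlistar(G)$.

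Because the argument is a direct restriction, I do not expect any real obstacle; the single point deserving a line of care is the order-preservation of restriction together with its consequence for the $\preceq_{\sigma}$ relation, which is immediate once one observes that $\preceq_{\sigma}$ depends only on the pairwise $\prec_{\sigma}$-comparisons within $e \cup f$, all of whose vertices remain present after deleting $V(G) \setminus V(G')$.
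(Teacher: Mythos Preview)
Your proposal is correct. The paper does not actually give a proof of this observation; it simply labels it as one of two ``straightforward observations'' and moves on, so your restriction argument is exactly the kind of routine verification the authors had in mind.
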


\begin{observation}
\label{observationDisjointComponents}
Let $G_1, \ldots, G_r$ be a collection of disjoint components that form a graph $G$, i.e, $V(G) = \biguplus_{i=1}^r V(G_i)$ and $E(G) = \biguplus_{i=1}^r E(G_i)$. If $\boxli(G) \geq 1$ for some $i \in [r]$, then $\boxli(G) = \max_{i \in [r]} \boxli(G_i)$. 
\end{observation}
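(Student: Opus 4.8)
The plan is to prove the two inequalities $\boxli(G) \geq \max_{i} \boxli(G_i)$ and $\boxli(G) \leq \max_{i} \boxli(G_i)$ separately. The first is immediate from monotonicity: since each component $G_i$ is a subgraph of $G$, Observation~\ref{observationPiMonotonicity} gives $\boxli(G_i) \leq \boxli(G)$ for every $i$, and hence $\max_{i} \boxli(G_i) \leq \boxli(G)$. No hypothesis is needed for this direction.

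For the reverse inequality I would give an explicit construction. Write $k = \max_{i \in [r]} \boxli(G_i)$; the hypothesis that some component has $\boxli(G_i) \geq 1$ guarantees $k \geq 1$, so there is at least one permutation to work with. For each component $G_i$ fix a pairwise suitable family of size $\boxli(G_i)$ and pad it, using arbitrary permutations of $V(G_i)$, to a list $\sigma^i_1, \ldots, \sigma^i_k$ of exactly $k$ permutations of $V(G_i)$. Then fix an arbitrary ordering $G_1, \ldots, G_r$ of the components and, for each $j \in [k]$, define a permutation $\sigma_j$ of $V(G)$ by concatenation: list the vertices of $G_1$ in the order given by $\sigma^1_j$, then those of $G_2$ in the order $\sigma^2_j$, and so on. Each component thus occupies a contiguous block of $\sigma_j$, and within that block the relative order is exactly $\sigma^i_j$.

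It then remains to check that $\{\sigma_1, \ldots, \sigma_k\}$ is pairwise suitable for $G$. Take two disjoint edges $e, f$. If they lie in the same component $G_i$, then some member $\sigma^i_j$ (with $j \leq \boxli(G_i) \leq k$) of the family for $G_i$ separates them, and since the block of $G_i$ in $\sigma_j$ inherits the order $\sigma^i_j$, the permutation $\sigma_j$ separates $e$ and $f$ as well. If $e$ and $f$ lie in different components, then in every $\sigma_j$ the two blocks are disjoint intervals of the ordering, so one edge entirely precedes the other; any single $\sigma_j$, say $\sigma_1$, separates them. Hence $\boxli(G) \leq k$, and combined with the first inequality this yields the claimed equality.

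The only genuinely delicate point is the role of the hypothesis. The concatenation argument needs at least one permutation, i.e. $k \geq 1$; without the assumption that some $\boxli(G_i) \geq 1$, one could have several nontrivial components each of separation dimension $0$ (stars or triangles, by the preceding Proposition), whose cross-component disjoint edges force $\boxli(G) = 1 > 0 = \max_{i} \boxli(G_i)$. I expect the write-up to foreground this hypothesis and to remark that, by the block structure of the concatenation, disjoint edges lying in distinct components are separated automatically by \emph{every} permutation of the family.
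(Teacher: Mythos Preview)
Your argument is correct; the paper does not actually supply a proof of this observation, presenting it only as one of two ``straightforward observations,'' and your concatenation construction (with padding to a common length $k$) is precisely the natural argument one would give. Your remark on the role of the hypothesis---that $k \geq 1$ is needed so that there is at least one permutation to separate cross-component edge pairs---is also on point and explains the otherwise odd-looking side condition.
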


 A nontrivial generalisation of Observation \ref{observationDisjointComponents}, when there are edges across the parts, is given later in Lemma \ref{lemmaMaxPairsParts}.

\begin{theorem}
\label{theoremBoxliSize}
For a graph $G$ on $n$ vertices,  $\boxli(G) \leq \boxlistar(G) \leq 6.84 \log n$. 
\end{theorem}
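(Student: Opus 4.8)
The statement bounds $\boxlistar(G)$ — the size of a smallest family that is both pairwise suitable and 3-mixing — by $6.84 \log n$. Since $\boxli(G) \leq \boxlistar(G)$ holds by definition, it suffices to bound $\boxlistar(G)$. The natural approach is a probabilistic (first-moment/union bound) argument: pick $k$ permutations of $V(G)$ independently and uniformly at random, compute for each "bad event" the probability it fails to be handled, and show that for $k = \lceil 6.84 \log n \rceil$ the expected number of unhandled requirements is less than $1$, so a good family exists.

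**Identifying the bad events and their probabilities.** Using the reformulation noted in the paper, a family is pairwise suitable and 3-mixing exactly when, for every pair of edges $e, f \in E(G)$, some $\sigma$ in the family satisfies $e \preceq_\sigma f$ or $f \preceq_\sigma e$. The hard constraints arise from pairs of edges spanning at most $4$ vertices. First I would analyze, for a single uniformly random permutation $\sigma$, the probability $p$ that a fixed pair of edges is correctly ordered (i.e. $e \preceq_\sigma f$ or $f \preceq_\sigma e$). The worst case is two disjoint edges on $4$ distinct vertices: among the $4! = 24$ orderings of the four relevant vertices, the number that separate the two edges is $2 \cdot 2 \cdot 2 = 8$ (choose which edge comes first, then order within each edge), giving $p = 8/24 = 1/3$. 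For adjacent edges sharing a vertex (the 3-mixing case on $3$ vertices), the favorable orderings give a larger $p$, so the disjoint-edge case is the bottleneck with $p = 1/3$. With $k$ independent permutations, the probability that a fixed pair is handled by none is $(1-p)^k \leq (2/3)^k$.

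**The union bound and the constant.** There are fewer than $\binom{m}{2} < n^4$ pairs of edges to worry about (since $m \leq \binom{n}{2} < n^2$). Taking a union bound, the probability that some pair is left unhandled is at most $n^4 (2/3)^k$. I would then set this strictly below $1$: requiring $n^4 (2/3)^k < 1$, i.e. $k > \frac{4 \log n}{\log(3/2)}$. Since $\log(3/2) = \log 3 - 1 \approx 0.585$, this gives $k > \frac{4}{0.585}\log n \approx 6.84 \log n$. Hence for $k = \lceil 6.84 \log n \rceil$ a pairwise-suitable and 3-mixing family of that size exists, yielding $\boxlistar(G) \leq 6.84 \log n$.

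**Main obstacle.** The only real subtlety is bookkeeping the probability $p$ correctly across the three relevant configurations (two disjoint edges on $4$ vertices; two adjacent edges on $3$ vertices; and pairs where the "requirement" involves the combined $\preceq$ condition) and verifying that $p = 1/3$ is genuinely the infimum, so that $(2/3)^k$ is a valid uniform upper bound on each failure probability. The constant $6.84$ then falls out of $4/\log(3/2)$, so I would track the base-$2$ logarithm convention (the paper's $\log$ is base $2$) carefully to land on exactly this numerical value rather than a constant off by a factor related to $\ln$ versus $\log$.
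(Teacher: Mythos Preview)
Your proposal is correct and follows essentially the same probabilistic argument as the paper: independent uniform random permutations, the $1/3$ success probability per pair of edges, and a union bound over fewer than $n^4$ pairs, yielding the constant $4/\log(3/2) \approx 6.84$. One minor slip: for adjacent edges the success probability is exactly $1/3$ (two of the six orderings of $\{a,b,c\}$ place the shared vertex $a$ in the middle), not strictly larger, but this does not affect your bound since $(2/3)^k$ remains a valid uniform upper bound on the failure probability.
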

\begin{proof}
From the definitions of $\boxli(G)$ and $\boxlistar(G)$ and Observation \ref{observationPiStarMonotonicity}, we have $\boxli(G) \leq \boxlistar(G) \leq \boxlistar(K_n)$, where $K_n$ denotes the complete graph on $n$ vertices. Here we prove that  $\boxlistar(K_n) \leq 6.84 \log n$. 

Choose $r$ permutations, $\sigma_1, \ldots, \sigma_r$, independently and uniformly at random from the $n!$ distinct permutations of $[n]$. Let $e$, $f$ be two distinct edges of $K_n$. The probability that $e \preceq_{\sigma_i} f$ is $1/6$ for each $i \in [r]$. ($4$ out of $4!$ outcomes are favourable when $e$ and $f$ are non-adjacent and $1$ out of $3!$ outcomes is favourable otherwise.) 
 
\begin{eqnarray*}
Pr[(e \preceq_{\sigma_i} f)~or~(f \preceq_{\sigma_i} e)] & = & Pr[(e \preceq_{\sigma_i} f)] + Pr[(f \preceq_{\sigma_i} e)] \\
& = & \frac{1}{6} + \frac{1}{6} \\
& = & \frac{1}{3}
\end{eqnarray*}
Therefore, 
\begin{eqnarray*}
Pr[\bigcap_{i=1}^r\left((e \npreceq_{\sigma_i} f) \cap (f \npreceq_{\sigma_i} e)\right)] & = & \left(Pr[(e \npreceq_{\sigma_i} f) \cap (f \npreceq_{\sigma_i} e)]\right)^r \\
& = & (1-\frac{1}{3})^r \\
& = & \left(\frac{2}{3}\right)^r 
\end{eqnarray*}
\begin{eqnarray*}
Pr[\bigcup_{\forall\mbox{ pairs of distinct edges  }e,f}\left(\bigcap_{i=1}^r\left((e \nprec_{\sigma_i} f) \cap (f \nprec_{\sigma_i} e)\right)\right)] < n^4 \left(\frac{2}{3}\right)^r 
\end{eqnarray*}
Substituting for $r= 6.84\log n$ in the above inequality, we get 
\begin{eqnarray*}
Pr[\bigcup_{\forall\mbox{ pairs of distinct edges  }e,f}\left(\bigcap_{i=1}^r\left((e \nprec_{\sigma_i} f) \cap (f \nprec_{\sigma_i} e)\right)\right)] < 1
\end{eqnarray*}
That is, there exists a family of permutations of $V(K_n)$ of cardinality at most $6.84\log n$ which is pairwise suitable and $3$ mixing for $K_n$.  
\qed \end{proof}

\subsubsection*{Tightness of Theorem \ref{theoremBoxliSize}}
Let $K_n$ denote a complete graph on $n$ vertices. Since $\omega(K_n) = n$, it follows from Corollary \ref{corollaryBoxliOmega} that $\boxli(K_n) \geq \log \floor{n/2}$. Hence the bound proved in Theorem \ref{theoremBoxliSize} is tight up to a constant factor.  

\subsection{Acyclic and star chromatic number}
\label{subsection:Acyclic}
\begin{definition}
\label{definitionAcyclicStarChromatic}
The \emph{acyclic chromatic number} of a graph $G$, denoted by $\chi_a(G)$, is the minimum number of colours needed to do a proper colouring of  the vertices of $G$ such that the graph induced on the vertices of every pair of colour classes is acyclic. The \emph{star chromatic number} of a graph $G$, denoted by $\chi_s(G)$, is the minimum number of colours needed to do a proper colouring of  the vertices of $G$ such that the graph induced on the vertices of every pair of colour classes is a star forest. 
\end{definition}

We know that that a star forest is a disjoint union of stars. Therefore, $\chi_s(G) \geq \chi_a(G) \geq \chi(G)$, where $\chi(G)$ denotes the chromatic number of $G$. In order to bound $\boxli(G)$ in terms of $\chi_a(G)$ and $\chi_s(G)$, we first bound $\boxli(G)$ for forests and star forests. Then the required result follows from an application of Lemma \ref{lemmaMaxPairsParts}.

\begin{lemma}
\label{lemmaBoxliForests}
For a forest $G$, $\boxli(G) \leq 2$.  
\end{lemma}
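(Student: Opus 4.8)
The plan is to show that any forest $G$ admits a pairwise suitable family of just two permutations, i.e.\ a separating embedding into $\R^2$. By Observation~\ref{observationDisjointComponents}, it suffices to handle a single tree, so I would assume $G$ is a tree and root it at an arbitrary vertex. The guiding intuition is that a pair of disjoint edges $e=\{a,b\}$ and $f=\{c,d\}$ in a tree is separated by the unique path connecting the two edges; I want two linear orders of $V(G)$ so that for every such disjoint pair, at least one order places all of $\{a,b\}$ before all of $\{c,d\}$ (or vice versa).

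First I would construct the two permutations from a depth-first search (equivalently, a planar drawing of the rooted tree). Fix a left-to-right ordering of the children at every vertex. Let $\sigma_1$ be the order in which vertices are \emph{first visited} by the DFS (a preorder traversal), and let $\sigma_2$ be the order in which vertices are first visited by the \emph{mirror} DFS, where at each vertex the children are explored in the reverse left-to-right order. The key structural fact I would establish is that in $\sigma_1$, the set of descendants of any vertex $v$ (including $v$) forms a contiguous block, with $v$ at its left end; this is the standard interval property of DFS preorder. I expect to verify that $\boxlistar$ or the plain pairwise-suitability only requires separation of disjoint edges, so I can work directly with $\sigma_1,\sigma_2$.

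Next I would do the case analysis for a disjoint pair $e=\{a,b\}$, $f=\{c,d\}$, where I take $b$ to be the parent of $a$ and $d$ the parent of $c$ (each edge has a well-defined upper and lower endpoint in the rooted tree). The unique path between the edges meets the edges only at their endpoints, and one argues according to the ancestor relationships. If neither edge's vertices are ancestors of the other's, then in $\sigma_1$ the descendant-blocks of the two edges are disjoint intervals, so one edge entirely precedes the other and $\sigma_1$ separates them. The remaining case is when one edge, say $f$, lies ``above'' $e$ on a root-to-leaf path, i.e.\ $d$ is an ancestor of $a$ but the edge $e$ hangs off to one side of $c$'s subtree relative to the other descendants; here the left-to-right child ordering decides whether $e$'s block sits to the left or right of the branch leading away from $f$, and I would show that exactly one of $\sigma_1,\sigma_2$ puts $e$ and $f$ on opposite sides and hence separates them. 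The reversal in $\sigma_2$ is precisely what handles the ``wrong-sided'' configurations that $\sigma_1$ fails to separate.

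The main obstacle I anticipate is the bookkeeping in the case where the two disjoint edges share an ancestor line, since there $e$ and $f$ can interleave in $\sigma_1$ when $e$'s vertices are nested strictly inside the span of $f$'s descendants without $f$ being separable by a single preorder; making the left-versus-right argument airtight, and checking that the mirror traversal $\sigma_2$ rescues every such configuration, is where the care is needed. A clean way to discharge this is to prove the sharper statement that for disjoint $e,f$ the later-visited edge's block is nested inside a child-subtree of the path, and that whether it lands to the left or right of the continuing path is swapped between $\sigma_1$ and $\sigma_2$; one of the two therefore separates. Finally I would remark that the bound $2$ is best possible, since a path on four edges (containing, e.g., $T_2$ or $P_5$) already has separation dimension $2$ by Theorem~\ref{prop:caterpillar}, so no forest bound better than $2$ holds in general.
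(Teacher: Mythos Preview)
Your approach is correct and essentially identical to the paper's: the paper also roots each tree and takes $\sigma_1$ to be a preorder traversal, while its $\sigma_2$ is the \emph{postorder} traversal with the same child ordering---which is precisely the reverse of your mirror-preorder, and hence separates exactly the same pairs of disjoint edges. (One small slip in your closing remark: $P_5$ is a caterpillar and therefore has separation dimension~$1$ by Theorem~\ref{prop:caterpillar}; the correct tightness witness among forests is $T_2$.)
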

\begin{proof}
Proof in Appendix \ref{AppSection:lemmaBoxliForests}
\qed \end{proof}

\begin{lemma}
\label{lemmaBoxliStars}
For a star forest $G$, $\boxli(G) = 1$.  
\end{lemma}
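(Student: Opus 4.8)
The plan is to exhibit a single permutation of $V(G)$ that separates every pair of disjoint edges in a star forest $G$, which by Definition \ref{definitionPairwiseSuitable} immediately gives $\boxli(G) \leq 1$; since a star forest containing at least one edge has two disjoint edges only when it has more than one star (and in any case is not edgeless), we get $\boxli(G) \geq 1$, establishing equality. Recall that $G$ is a disjoint union of stars, so $V(G)$ partitions into stars $S_1, \ldots, S_t$, where each $S_j$ has a center $c_j$ and some leaves; the edges of $G$ are exactly the center-to-leaf pairs, and every edge of $S_j$ contains $c_j$.

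First I would construct the permutation $\sigma$ by processing the stars one at a time in an arbitrary fixed order $S_1, S_2, \ldots, S_t$, and within each star $S_j$ I would place the center $c_j$ first, immediately followed by all of its leaves, before moving on to $S_{j+1}$. Thus $\sigma$ lists the vertices of $S_1$ (center then leaves), then the vertices of $S_2$, and so on. The key structural observation is that in this ordering every vertex of $S_j$ precedes every vertex of $S_{j'}$ whenever $j < j'$, and within a single star the center comes before all its leaves.

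Next I would verify that $\sigma$ separates any two disjoint edges $e, f$. Since edges are center-leaf pairs, consider two edges $e \in E(S_j)$ and $f \in E(S_{j'})$ that are disjoint (in particular $e \neq f$). If $j \neq f$'s star index, say $j < j'$, then because all of $S_j$ precedes all of $S_{j'}$ in $\sigma$, both endpoints of $e$ precede both endpoints of $f$, so $e \prec_\sigma f$ and $\sigma$ separates them. If instead $e$ and $f$ lie in the same star $S_j$, then both contain the common center $c_j$, so they are not disjoint --- contradicting our assumption. Hence any two disjoint edges necessarily lie in distinct stars and are separated by $\sigma$, so $\{\sigma\}$ is pairwise suitable and $\boxli(G) \leq 1$.

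I do not expect any real obstacle here: the whole point is that disjoint edges in a star forest can only come from different stars, so a block ordering that keeps each star contiguous trivially separates them. The only mild care needed is the lower bound $\boxli(G) = 1$ rather than $\leq 1$: a star forest with at least one edge cannot have separation dimension $0$, since Observation \ref{observationPiMonotonicity} together with the preceding Proposition (which characterizes $\boxli(G) = 0$ as stars or triangles plus isolated vertices) shows that any star forest with two or more nontrivial components, or indeed any graph possessing a pair of disjoint edges, has $\boxli(G) \geq 1$. Combining the two bounds yields $\boxli(G) = 1$ exactly.
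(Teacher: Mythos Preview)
Your construction for the upper bound is correct and more elementary than the paper's. The paper's entire proof is a one-line appeal to Theorem~\ref{prop:caterpillar}: a star forest is trivially a disjoint union of caterpillars (with no humps), so $\pi(G) \leq 1$ is immediate from the characterization. Your explicit block-ordering permutation is self-contained and does not depend on that structural theorem; it is essentially the specialization to stars of the construction given in the $(ii)\Rightarrow(i)$ part of Theorem~\ref{prop:caterpillar}. The paper's route is shorter once the characterization is in hand; yours would stand on its own even without it.

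One slip in your lower-bound discussion: the sentence ``a star forest with at least one edge cannot have separation dimension $0$'' is false as written --- a single star (plus isolated vertices) has $\pi(G)=0$ by the very Proposition you cite. Your next clause gets it right: $\pi(G)\geq 1$ exactly when $G$ has a pair of disjoint edges, which for a star forest means at least two nontrivial components. The equality $\pi(G)=1$ in the lemma should be read with this tacit nondegeneracy assumption; note that the paper's own proof via Theorem~\ref{prop:caterpillar} only delivers $\pi(G)\leq 1$ and is equally loose on this point.
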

\begin{proof}
Follows directly from Theorem \ref{prop:caterpillar}. 
\qed \end{proof}

Using Theorem \ref{theoremBoxliSize}, we shall now prove the following lemma which together with Lemmas \ref{lemmaBoxliForests} and \ref{lemmaBoxliStars} will give us Theorem \ref{theoremBoxliAcyclicStar}. 

\begin{lemma}
\label{lemmaMaxPairsParts}
Let $P_G=\{V_1, \ldots, V_r\}$ be a partitioning of the vertices of a graph $G$, i.e., $V(G) = V_1 \uplus \cdots \uplus V_r$. Let $\hat{\boxli}(P_G) = \max_{i,j \in [r]} \boxli(G[V_i \cup V_j])$. Then, $\boxli(G) \leq 13.68 \log r + \hat{\boxli}(P_G) r$.   
\end{lemma}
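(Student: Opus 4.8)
I would prove this by exhibiting a family of permutations of $V(G)$ that is pairwise suitable for $G$, built as the union of two subfamilies, and I would organise the verification around a classification of pairs of disjoint edges according to how their vertices are distributed among the parts $V_1,\ldots,V_r$. For an edge $e$ let $P(e)\subseteq[r]$ denote the (one or two) parts that $e$ meets. Given two disjoint edges $e,f$, the two regimes are: (a) $|P(e)\cup P(f)|\geq 3$, i.e.\ $e$ and $f$ together touch at least three parts; and (b) $P(e)\cup P(f)\subseteq\{i,j\}$ for some two indices, i.e.\ both $e$ and $f$ live inside a single induced subgraph $G[V_i\cup V_j]$. Regime (a) will be the job of a \emph{coarse} subfamily of size $\leq 13.68\log r$, and regime (b) the job of a \emph{fine} subfamily of size $\leq \hat{\boxli}(P_G)\,r$.

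For the coarse subfamily I would invoke Theorem~\ref{theoremBoxliSize} to fix a family $\mathcal{T}=\{\tau_1,\ldots,\tau_s\}$ of permutations of $[r]$ that is pairwise suitable and $3$-mixing for $K_r$, with $s=\boxlistar(K_r)\leq 6.84\log r$. From each $\tau\in\mathcal{T}$ I would build \emph{two} permutations of $V(G)$: order the parts as blocks according to $\tau$, and within each block list its vertices in a fixed order to get $\sigma_\tau$, and in the reversed order to get $\sigma'_\tau$. This gives $2s\leq 13.68\log r$ permutations. When $P(e)$ and $P(f)$ are disjoint as sets, pairwise suitability (for two part-edges) or the $3$-mixing property (for the case where one of $e,f$ lies inside a single part) yields a $\tau$ whose block order places all of $e$ strictly before all of $f$, so $\sigma_\tau$ already separates them. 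When $P(e)=\{a,b\}$ and $P(f)=\{a,c\}$ share exactly one part $a$ with $b\neq c$, the $3$-mixing property gives a $\tau$ with $b\prec_\tau a\prec_\tau c$; the $b$-endpoint of $e$ then precedes block $a$ and the $c$-endpoint of $f$ follows it, so whether $e$ and $f$ are separated depends only on the relative order \emph{inside block $a$} of the two distinct vertices $e\cap V_a$ and $f\cap V_a$. Reversing the within-block order flips exactly this comparison, so one of $\sigma_\tau,\sigma'_\tau$ is guaranteed to separate $e$ and $f$. This is precisely why the coarse subfamily carries the factor $2$, giving $13.68\log r$ rather than $6.84\log r$.

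For the fine subfamily I would handle, for each pair $\{i,j\}$, the disjoint edges confined to $G[V_i\cup V_j]$ using a pairwise suitable family of $G[V_i\cup V_j]$ of size $\leq\hat{\boxli}(P_G)$, padded to exactly $\hat{\boxli}(P_G)$ permutations $\lambda^{ij}_1,\ldots,\lambda^{ij}_{\hat{\boxli}(P_G)}$. The naive union of these over all $\binom{r}{2}$ pairs is far too large, so the crucial idea is to \emph{combine} them using a proper edge-colouring of $K_r$ with at most $r$ colours. For a fixed colour class $M$ (a matching on the parts) and a fixed layer $\ell$, the pairs $\{i,j\}\in M$ act on pairwise disjoint vertex sets $V_i\cup V_j$, so the permutations $\lambda^{ij}_\ell$ can be concatenated into a single global permutation $\mu_{M,\ell}$ (the blocks $V_i\cup V_j$ laid out contiguously in any order, uncovered parts appended). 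This produces at most $r\cdot\hat{\boxli}(P_G)$ permutations. If $e,f$ lie in a common $G[V_i\cup V_j]$, the pair $\{i,j\}$ gets some colour $M_0$ and some $\lambda^{ij}_{\ell_0}$ separates them; since $e$ and $f$ sit entirely inside the contiguous block $V_i\cup V_j$ of $\mu_{M_0,\ell_0}$, the separation inside the block is a separation in $\mu_{M_0,\ell_0}$. Adding the two subfamily sizes yields the claimed bound.

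The step I expect to be the main obstacle is exactly this combining of the $\binom{r}{2}$ local families into only $r\cdot\hat{\boxli}(P_G)$ global permutations; the edge-colouring of $K_r$ into $\leq r$ matchings is the device that replaces the factor $\binom{r}{2}$ by $r$, and the work is in checking that concatenation over a matching preserves each local separation (because vertex sets of distinct matched pairs are disjoint and each block is contiguous). The secondary subtlety is the shared-single-part case of regime (a), where the block order alone may interleave $e$ and $f$; verifying that forward/reverse within-block orientation always fixes this is what justifies the doubling in the $13.68\log r$ term. I would finish by confirming the small-$r$ boundary cases (in particular $r=1$, where $\hat{\boxli}(P_G)=\boxli(G)$ and the bound is immediate) and that every part is covered by some colour class for $r\geq 2$, so that within-single-part edge pairs are indeed caught by the fine subfamily.
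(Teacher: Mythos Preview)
Your proposal is correct and follows essentially the same approach as the paper's proof. The paper likewise uses a proper edge-colouring of $K_r$ into at most $r$ matchings to glue the local families $\boxli(G[V_i\cup V_j])$ into $r\cdot\hat{\boxli}(P_G)$ global permutations (phrased there via auxiliary subgraphs $G_p$ and Observation~\ref{observationDisjointComponents} on disjoint components rather than explicit concatenation), and likewise doubles a pairwise-suitable $3$-mixing family for $K_r$ of size $\leq 6.84\log r$ by reversing the within-block order, with the same case analysis on how the endpoints of $e,f$ distribute among parts.
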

\begin{proof}
Proof in Appendix \ref{AppSection:lemmaMaxPairsParts}
\qed \end{proof}

\begin{theorem}
\label{theoremBoxliAcyclicStar}
For a graph $G$, $\boxli(G) \leq 2\chi_a(G) + 13.68\log(\chi_a(G))$. Further, if the star chromatic number of $G$ is $\chi_s$, then   $\boxli(G) \leq \chi_s(G) + 13.68\log(\chi_s(G))$. 
\end{theorem}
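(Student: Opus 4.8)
The plan is to derive both inequalities as essentially immediate consequences of Lemma~\ref{lemmaMaxPairsParts}, feeding it the vertex partition supplied by an optimal acyclic (respectively star) colouring. The whole point of having isolated Lemma~\ref{lemmaMaxPairsParts} together with the two base estimates for forests (Lemma~\ref{lemmaBoxliForests}) and star forests (Lemma~\ref{lemmaBoxliStars}) is that the theorem then reduces to a single substitution, so I would present it as such rather than re-deriving anything.

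For the first inequality, I would fix a proper acyclic colouring of $G$ using $\chi_a(G)$ colours and let $P_G = \{V_1,\dots,V_r\}$ be the resulting partition of $V(G)$ into colour classes, where $r = \chi_a(G)$. By Definition~\ref{definitionAcyclicStarChromatic}, the graph induced on the union of any two colour classes is acyclic, i.e.\ a forest; hence $\boxli(G[V_i \cup V_j]) \le 2$ for every $i,j \in [r]$ by Lemma~\ref{lemmaBoxliForests}, so $\hat{\boxli}(P_G) \le 2$. Plugging $r = \chi_a(G)$ and $\hat{\boxli}(P_G) \le 2$ into the bound $\boxli(G) \le 13.68\log r + \hat{\boxli}(P_G)\, r$ of Lemma~\ref{lemmaMaxPairsParts} yields exactly $\boxli(G) \le 2\chi_a(G) + 13.68\log(\chi_a(G))$.

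The second inequality follows the identical template with the star colouring. I would take an optimal star colouring of $G$ with $\chi_s(G)$ colours, and again partition $V(G)$ into the corresponding colour classes. Now the graph induced on any two colour classes is a star forest, so Lemma~\ref{lemmaBoxliStars} gives $\boxli(G[V_i \cup V_j]) = 1$ and therefore $\hat{\boxli}(P_G) \le 1$. Substituting $r = \chi_s(G)$ and $\hat{\boxli}(P_G) \le 1$ into Lemma~\ref{lemmaMaxPairsParts} produces $\boxli(G) \le \chi_s(G) + 13.68\log(\chi_s(G))$, completing the proof.

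I do not expect any genuine obstacle at the level of this theorem: the only subtlety is making sure the defining property of each colouring is read off correctly (``every pair of colour classes induces an acyclic graph / a star forest''), so that the per-pair bounds of Lemma~\ref{lemmaBoxliForests} and Lemma~\ref{lemmaBoxliStars} apply verbatim. All the real difficulty is quarantined inside Lemma~\ref{lemmaMaxPairsParts}, whose proof (deferred to the appendix) must combine the per-pair suitable families across all $\binom{r}{2}$ pairs while paying only an additive $13.68\log r$ overhead — that is where a probabilistic merging argument of the flavour of Theorem~\ref{theoremBoxliSize} is presumably needed, and it is the step I would flag as the technical heart of the overall argument even though it is assumed here.
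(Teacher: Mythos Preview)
Your proposal is correct and matches the paper's proof exactly: the paper simply states that the theorem follows directly from Lemmas~\ref{lemmaBoxliForests}, \ref{lemmaBoxliStars}, and~\ref{lemmaMaxPairsParts}, which is precisely the substitution you describe. Your identification of Lemma~\ref{lemmaMaxPairsParts} as the technical heart is also accurate.
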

\begin{proof}
The theorem follows directly from Lemmas \ref{lemmaBoxliForests}, \ref{lemmaBoxliStars}, and \ref{lemmaMaxPairsParts}.  
\qed \end{proof}

This, together with some existing results from literature, gives us a few easy corollaries. Alon, Mohar, and Sanders have showed that a graph embeddable in a surface of Euler genus $g$ has an acyclic chromatic number in $O(g^{4/7})$ \cite{alonacyclicgenus}. It is noted by Esperet and Joret in \cite{esperet2011boxicity}, using results of Nesetril, Ossona de Mendez, Kostochka, and Thomason, that graphs with no $K_t$ minor have an acyclic chromatic number in $\order{t^2 \log t}$. Hence the following corollary.

\begin{corollary}
\label{corollaryBoxliGenus}
\begin{enumerate}[(i)]
\item For a graph $G$ with Euler genus $g$, $\boxli(G) \in O(g^{4/7})$; and
\item for a graph $G$ with no $K_t$ minor, $\boxli(G) \in O(t^2 \log t)$.  
\end{enumerate}
\end{corollary}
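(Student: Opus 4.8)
The plan is to read off both statements directly from Theorem \ref{theoremBoxliAcyclicStar} by substituting the bounds on the acyclic chromatic number quoted immediately before the corollary. For part (i) I would invoke the result of Alon, Mohar, and Sanders that a graph of Euler genus $g$ satisfies $\chi_a(G) \in \order{g^{4/7}}$; for part (ii) I would invoke the bound $\chi_a(G) \in \order{t^2 \log t}$ for $K_t$-minor-free graphs, attributed in the text to Esperet and Joret (via Ne\v{s}et\v{r}il, Ossona de Mendez, Kostochka, and Thomason). No other external input is needed, since Theorem \ref{theoremBoxliAcyclicStar} already converts a bound on $\chi_a$ into a bound on $\boxli$.

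Next I would apply the first inequality of Theorem \ref{theoremBoxliAcyclicStar}, namely $\boxli(G) \leq 2\chi_a(G) + 13.68\log(\chi_a(G))$. The only analytic point to verify is that the logarithmic correction term is asymptotically dominated by the linear term, so that $\boxli(G) \in \order{\chi_a(G)}$. Since $\chi_a(G) \geq 1$ and grows without bound as the relevant parameter ($g$ or $t$) increases, we have $\log(\chi_a(G)) \in \order{\chi_a(G)}$, and hence $2\chi_a(G) + 13.68\log(\chi_a(G)) \in \order{\chi_a(G)}$. Substituting the two cited bounds then yields $\boxli(G) \in \order{g^{4/7}}$ for part (i) and $\boxli(G) \in \order{t^2 \log t}$ for part (ii), as required.

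I do not anticipate any genuine obstacle here: the whole argument is a substitution, and the one mild step — absorbing $\log(\chi_a(G))$ into $\order{\chi_a(G)}$ — is routine. The substantive content of the corollary resides entirely in the two external acyclic-colouring theorems and in Theorem \ref{theoremBoxliAcyclicStar}, all of which we are entitled to assume; the corollary itself is simply their composition.
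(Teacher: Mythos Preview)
Your proposal is correct and matches the paper's own justification exactly: the corollary is stated immediately after the two cited acyclic-chromatic-number bounds and is derived simply by plugging them into Theorem~\ref{theoremBoxliAcyclicStar}. The paper does not even spell out the absorption of the $\log(\chi_a(G))$ term, so your write-up is, if anything, slightly more detailed.
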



\subsection{Planar graphs}
\label{subsection:planar}
Since planar graphs have acyclic chromatic number at most $5$ \cite{borodin1979acyclic}, it follows from Theorem \ref{theoremBoxliAcyclicStar} that, for every planar graph $G$, $\boxli(G) \leq 42$. Using Schnyder's celebrated result on non-crossing straight line plane drawings of planar graphs we improve this bound to the best possible.

\begin{theorem}[Schnyder, Theorem $1.1$ in \cite{schnyder1990embedding}]
\label{theoremSchnyder}
Let $\lambda_1$, $\lambda_2$, $\lambda_3$ be three pairwise non parallel straight lines in the plane. Then, each plane graph has a straight line embedding in which any two disjoint edges are separated by a straight line parallel to $\lambda_1$, $\lambda_2$ or $\lambda_3$. 
\end{theorem}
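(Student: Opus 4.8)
The plan is to prove the theorem through Schnyder's machinery of realizers (Schnyder woods) together with the barycentric coordinate embedding they induce. First I would reduce to the case of a triangulation: given an arbitrary plane graph $G$, augment it into a maximal plane graph $T$ by adding edges inside every face (introducing, if necessary, three auxiliary outer vertices $a_1,a_2,a_3$ forming a bounding triangle). Any two vertex-disjoint edges of $G$ are still edges of $T$ and remain vertex-disjoint there, so a straight-line drawing of $T$ in which every disjoint pair of its edges is separated by a line parallel to one of $\lambda_1,\lambda_2,\lambda_3$ separates, in particular, every disjoint pair of $G$; hence it suffices to prove the statement for $T$. On $T$ I would then construct a \emph{Schnyder wood}: an orientation and $3$-colouring of the interior edges (colours $1,2,3$) in which each interior vertex has exactly one outgoing edge of each colour, the edges appear around every vertex in the canonical cyclic pattern, and each corner $a_i$ is the sink of colour $i$. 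Existence follows by the standard induction along a canonical (shelling) ordering of the vertices of a triangulation.

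Following the outgoing colour-$i$ edges from a vertex $v$ traces a monochromatic path $P_i(v)$ from $v$ to the corner $a_i$; the three paths $P_1(v),P_2(v),P_3(v)$ are internally disjoint and partition $T$ into three regions $R_1(v),R_2(v),R_3(v)$. I would set the coordinate $v_i$ to be the number of faces of $T$ lying in $R_i(v)$, normalised so that $v_1+v_2+v_3$ is the same constant for every vertex, and place $v$ at the point with barycentric coordinates $(v_1,v_2,v_3)$ with respect to a fixed reference triangle whose three sides are parallel to $\lambda_1,\lambda_2,\lambda_3$. Schnyder's region argument shows that this yields a crossing-free straight-line drawing, so the three lines $\lambda_i$ indeed play the role of the three coordinate directions.

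The heart of the proof, and the step I expect to be the main obstacle, is establishing the \emph{separation} property for these coordinate directions, which is genuinely stronger than mere non-crossing. For each colour $i$ the map $v \mapsto v_i$ induces a linear order $\sigma_i$ on the vertices (ties broken arbitrarily), and a line parallel to $\lambda_i$ separates two edges precisely when the two $i$-coordinates on one edge both lie strictly below the two on the other, i.e. when $e \prec_{\sigma_i} f$ or $f \prec_{\sigma_i} e$. So I must show that for every pair of vertex-disjoint edges $e=\{a,b\}$ and $f=\{c,d\}$ there is a colour $i$ with $\max(a_i,b_i) < \min(c_i,d_i)$ (or the reverse). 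This rests on Schnyder's region/monotonicity lemma: if a vertex $u$ lies in the interior of $R_i(v)$ then $u_i < v_i$, and each $i$-coordinate is strictly monotone along monochromatic paths. I would combine these inequalities with a case analysis on how the four endpoints of $e$ and $f$ are distributed among one another's Schnyder regions, in order to single out the separating colour; the delicate part is ruling out the degenerate configuration in which the coordinate intervals of $e$ and $f$ overlap simultaneously in all three directions, which the region partition together with the acyclicity of each colour class forbids.

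Finally, reading off the three induced orders $\sigma_1,\sigma_2,\sigma_3$ and applying the separation property just established shows that any two disjoint edges of $T$ (hence of $G$) are separated by a line parallel to some $\lambda_i$, which is exactly the assertion of the theorem; by Definition~\ref{definitionPairwiseSuitable} and Theorem~\ref{theoremConnectionBoxliPermutation} this also immediately gives $\boxli(G)\le 3$ for every planar $G$.
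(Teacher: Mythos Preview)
The paper does not prove this statement at all: Theorem~\ref{theoremSchnyder} is quoted verbatim from Schnyder's 1990 SODA paper and used as a black box to derive Theorem~\ref{theoremBoxliPlanar}. So there is no ``paper's own proof'' to compare against; what you have sketched is, in outline, Schnyder's original argument itself (triangulate, build a realizer/Schnyder wood, take face-count barycentric coordinates with respect to a reference triangle whose sides are parallel to $\lambda_1,\lambda_2,\lambda_3$).

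As a plan this is the right one, but be aware that the step you flag as ``the main obstacle'' really is where all the content lives, and your sketch does not yet pin it down. Two points to watch. First, the face-count coordinates can tie, so ``ties broken arbitrarily'' is not adequate for producing a \emph{line} parallel to $\lambda_i$ that strictly separates two edges; Schnyder handles this by working with a refined total order (lexicographic with a second coordinate, or equivalently the ``weak barycentric representation'' axioms), and you will need that refinement rather than an arbitrary tie-break. Second, the clean route to edge separation is not a direct four-vertex case analysis but the vertex--edge lemma: for every edge $\{u,v\}$ and every third vertex $w$ there is a colour $i$ with $w_i$ strictly below both $u_i$ and $v_i$ in the refined order. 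Applying this lemma to each endpoint of $f$ against the edge $e$, and vice versa, together with the constant-sum constraint, is what forces a common separating direction; trying to do the case split on regions from scratch is messier and is where an incomplete argument is most likely to leak. If you carry out those two refinements you will have reproduced Schnyder's proof; the present paper, however, simply invokes it.
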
 

This immediately gives us the following tight bound for planar graphs.

\begin{theorem}
\label{theoremBoxliPlanar}
Separation dimension of a planar graph is at most $3$. More over there exist planar graphs with separation dimension $3$.
\end{theorem}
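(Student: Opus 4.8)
The plan is to prove the two parts of Theorem~\ref{theoremBoxliPlanar} separately: the upper bound $\boxli(G) \leq 3$ for every planar graph, and the existence of a planar graph with $\boxli(G) = 3$.

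For the upper bound, I would invoke Schnyder's theorem (Theorem~\ref{theoremSchnyder}) directly. The key observation is that the separation dimension, as explained in the introduction, is exactly the smallest $k$ for which the vertices of $G$ embed into $\R^k$ so that any two disjoint edges are separated by a hyperplane normal to one of the coordinate axes. Schnyder's theorem gives a straight-line plane embedding (so each vertex maps to a point in $\R^2$, and edges are drawn as the straight segments between their endpoints) in which any two disjoint edges are separated by a straight line parallel to one of three fixed pairwise non-parallel directions $\lambda_1,\lambda_2,\lambda_3$. The idea is to convert these three directions into three separating \emph{projections}: project each vertex orthogonally onto each of the three lines normal to $\lambda_1,\lambda_2,\lambda_3$, obtaining three permutations $\sigma_1,\sigma_2,\sigma_3$ of $V(G)$ (breaking ties arbitrarily). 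First I would argue that if two disjoint edges $e,f$ are separated by a line $\ell$ parallel to $\lambda_i$, then on the axis perpendicular to $\lambda_i$ all of $e$ lies strictly on one side and all of $f$ strictly on the other, so the corresponding permutation $\sigma_i$ places every vertex of $e$ before every vertex of $f$ (or vice versa), i.e.\ $\sigma_i$ separates $e$ and $f$. Hence $\{\sigma_1,\sigma_2,\sigma_3\}$ is pairwise suitable for $G$, giving $\boxli(G)\leq 3$. A careful point is that Schnyder separates the \emph{open} segments, so I must confirm that disjoint edges have disjoint vertex sets and that a separating line transverse to the two segments indeed strictly separates the endpoints along the normal direction; this is where the argument needs the two edges to be disjoint (no shared endpoint) so that no endpoint sits on $\ell$ ambiguously, and ties along the projection can be resolved in favour of separation.

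For the lower bound, the strategy is to exhibit a specific planar graph whose separation dimension is at least $3$. The natural candidate is a series-parallel graph, since the outline of the paper explicitly promises such an example; the cleanest route is to point to the lower-bounding machinery developed later (Section~\ref{subsection:LowerBound}) or to directly verify that some small planar graph cannot be separated in $\R^2$. Concretely I would take a graph built from $C_4$, $N_3$, or $T_2$ gadgets (all shown in the proof of Theorem~\ref{prop:caterpillar} to have separation dimension $2$) and combine them so that no two coordinate directions suffice; alternatively, a complete graph $K_4$ or a suitable stacked/series-parallel construction works. The verification amounts to showing that for any embedding of the chosen graph into $\R^2$, some pair of disjoint edges fails to be separated by a hyperplane normal to either axis — equivalently, that its line graph has boxicity at least $3$ via Theorem~\ref{theoremConnectionBoxliPermutation}.

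The main obstacle is the lower bound rather than the upper bound: the upper bound is essentially a translation of Schnyder's theorem into the language of permutations, with only the minor technical care about strict separation and tie-breaking. The lower bound requires either a clean ad~hoc argument that two axis-parallel directions cannot separate all disjoint-edge pairs of the chosen planar graph, or an appeal to the $\omega$-based / general lower bound theorem proved later in the paper. I would first settle on the smallest explicit planar (series-parallel) witness, then discharge the lower bound by the cleanest available tool — ideally citing the lower-bound theorem of Section~\ref{subsection:LowerBound} so as not to duplicate a case analysis.
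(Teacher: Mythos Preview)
Your upper-bound argument is essentially the paper's: invoke Schnyder's theorem, project onto the three normal directions, and read off three permutations. The paper just picks the concrete lines $y=0$, $x=0$, $x+y=0$ and sorts by $v_x$, $v_y$, $v_x+v_y$; your version is the same idea stated abstractly. The tie-breaking worry is real but minor and is handled exactly as in the introduction (break ties arbitrarily; if a separating line passes through endpoints, perturb it).

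The lower bound, however, has a gap in your plan. Your preferred route---``cite the lower-bound theorem of Section~\ref{subsection:LowerBound}''---cannot give $3$ for any planar graph: Corollary~\ref{corollaryBoxliOmega} yields $\boxli(G) \geq \log\lfloor \omega(G)/2 \rfloor$, and since planar graphs have $\omega \leq 4$ this is at most $\log 2 = 1$. Theorem~\ref{theoremBoxliLowerBound} does no better here. So the machinery of Section~\ref{subsection:LowerBound} is simply too weak for this purpose, and you must do the ad~hoc argument you were hoping to avoid.

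The paper's choice is $K_4$, with a two-line counting argument you did not identify: $K_4$ has exactly three pairs of disjoint edges, namely the three perfect matchings, and any single permutation of four vertices separates \emph{exactly one} of these matchings (in the order $a\,b\,c\,d$, only $\{a,b\}$ and $\{c,d\}$ are separated). Hence at least three permutations are required, so $\boxli(K_4)=3$. This is far simpler than building a gadget from $C_4$, $N_3$, $T_2$ or than the line-graph boxicity route you sketched; commit to $K_4$ and this observation, and the tightness is immediate.
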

\begin{proof}
Consider the following three pairwise non parallel lines in $\R^2$: $\lambda_1 = \{(x,y) : y = 0, x \in \R \} $, $\lambda_2 = \{(x,y) : x = 0, y \in \R \}$ and $\lambda_3 = \{(x,y): x,y \in \R, x+y = 0\}$. Let $f: V(G) \into \R^2$ be  an embedding  such that any two disjoint edges in $G$ are separated by a straight line parallel to $\lambda_1$, $\lambda_2$ or $\lambda_3$. For every vertex $v$, let $v_x$ and $v_y$ denote the projections of $f(v)$ on to the $x$ and $y$ axes respectively. 

Construct $3$ permutations $\sigma_1, \sigma_2, \sigma_3$ such that $u_x < v_x \implies u \prec_{\sigma_1} v$, $u_y < v_y \implies u \prec_{\sigma_2} v$, and $u_x + u_y < v_x + v_y \implies u \prec_{\sigma_3} v$, with ties broken arbitrarily. Now it is easy to verify that any two disjoint edges of $G$ separated by a straight line parallel  to $\lambda_i$ in the embedding $f$, will be separated in $\sigma_i$.

Tightness of the theorem follows from considering $K_4$, the complete graph on $4$ vertices which is a planar graph. Any single permutation of its $4$ vertices separates exactly one pair of disjoint edges. Since $K_4$ has $3$ pairs of disjoint edges, we need exactly $3$ permutations.
\qed \end{proof}

\subsubsection*{Outerplanar and series-parallel graphs}
We know that outerplanar graphs form a subclass of series-parallel graphs which in turn form a subclass of planar graphs. It is not difficult to see that the separation dimension of outerplanar graphs is at most $2$. The idea is to take one permutation by reading the vertices from left to right along the spine in a one page embedding of the graph and the second permutation in the order in which we see the vertices when we recursively peel off the outermost edge till every vertex is enlisted. As for series-parallel graphs, we show that there exist series-parallel graphs with separation dimension $3$ (see Appendix \ref{AppSection:prop:seriesparallel}). 

\subsection{Lower bounds}
\label{subsection:LowerBound}
The tightness of many of the upper bounds we showed in the previous section relies on the lower bounds we derive in this section. First, we show that if a graph contains a uniform bipartite subgraph, then it needs a large separation dimension. This immediately gives a lower bound on separation dimension for complete bipartite graphs and hence a lower bound for every graph $G$ in terms $\omega(G)$. The same is used to obtain a lower bound on the separation dimension for random graphs of all density. Finally, it is used as a critical ingredient in proving a lower bound on the separation dimension for complete $r$-uniform hypergraphs. 



\begin{theorem}
\label{theoremBoxliLowerBound}
For a graph $G$, let $V_1, V_2 \subsetneq V(G)$ such that $V_1 \cap V_2 = \emptyset$. If  there exists an edge between every $s_1$-subset of $V_1$ and every $s_2$-subset of $V_2$, then $\boxli(G) \geq \min \left\{ \log \frac{|V_1|}{s_1}, \log \frac{|V_2|}{s_2} \right\}$.  
\end{theorem}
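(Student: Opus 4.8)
The plan is to argue directly with pairwise suitable families of permutations and proceed by contradiction. Set $k = \boxli(G)$ and let $\F = \{\sigma_1, \ldots, \sigma_k\}$ be a smallest pairwise suitable family for $G$. Suppose, towards a contradiction, that $k < \min\{\log(|V_1|/s_1), \log(|V_2|/s_2)\}$, so that simultaneously $2^k < |V_1|/s_1$ and $2^k < |V_2|/s_2$. My goal is to exhibit two disjoint edges of $G$ that $\emph{no}$ permutation in $\F$ separates, contradicting pairwise suitability.

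The heart of the argument is a greedy halving that processes the permutations one at a time. I would construct nested chains $V_1 = A_0 \supseteq A_1 \supseteq \cdots \supseteq A_k$ and $V_2 = B_0 \supseteq B_1 \supseteq \cdots \supseteq B_k$ so that, after $\sigma_i$ is processed, the surviving blocks $A_i$ and $B_i$ are completely separated by $\sigma_i$ (one block entirely precedes the other), while $|A_i| \geq |A_{i-1}|/2$ and $|B_i| \geq |B_{i-1}|/2$. At step $i$, I would locate the median element of $A_{i-1}$ under $\sigma_i$, occupying some position $p$; this element lies in $V_1$, so no $B$-vertex shares position $p$, and a majority (at least half) of $B_{i-1}$ lies strictly on one side of $p$. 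Keeping the half of $A_{i-1}$ on the opposite side of $p$ together with that majority of $B_{i-1}$ yields $A_i, B_i$ that are separated in $\sigma_i$ with both losses bounded by a factor of two. Since $A_i \subseteq A_{i-1}$ and $B_i \subseteq B_{i-1}$, separation under $\sigma_1, \ldots, \sigma_{i-1}$ is preserved automatically. After $k$ rounds I obtain $A_k \subseteq V_1$ and $B_k \subseteq V_2$ separated by every $\sigma_i$, with $|A_k| \geq |V_1|/2^k > s_1$ and $|B_k| \geq |V_2|/2^k > s_2$.

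To finish, I would invoke the edge hypothesis twice. Because $|A_k| > s_1$ and $|B_k| > s_2$, some $s_1$-subset of $A_k$ and $s_2$-subset of $B_k$ span an edge $\{a,b\}$ with $a \in A_k$, $b \in B_k$; deleting $a$ and $b$ still leaves at least an $s_1$-subset and an $s_2$-subset, producing a second edge $\{a', b'\}$ with $a' \in A_k \setminus \{a\}$ and $b' \in B_k \setminus \{b\}$. These two edges are disjoint since $V_1 \cap V_2 = \emptyset$. Each edge has exactly one endpoint in $A_k$ and one in $B_k$, so in any $\sigma_i$, where $A_k$ and $B_k$ form two separated blocks, both edges straddle the gap between the blocks: the $\sigma_i$-position interval of each edge contains the boundary value separating the blocks, so the two intervals overlap. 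Hence $\sigma_i$ fails to separate $\{a,b\}$ and $\{a',b'\}$ for every $i$, the desired contradiction, giving $\boxli(G) \geq \min\{\log(|V_1|/s_1), \log(|V_2|/s_2)\}$.

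The step I expect to be the main obstacle is the halving: one must retain a constant fraction of $\emph{both}$ $V_1$ and $V_2$ while forcing a clean separation in the current permutation. Pivoting on the median of one set and routing the larger half of the other set to the far side is precisely what bounds both losses by one-half at once, and one must check that the median pivot (being a $V_1$-vertex) never collides with $B$-positions. The remaining ingredients — that taking subsets preserves earlier separations, and the final overlap computation — are routine.
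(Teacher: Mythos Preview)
Your argument is correct and follows the same strategy as the paper: iteratively halve $V_1$ and $V_2$ so that the surviving pair is separated in every permutation processed so far, then exhibit two disjoint crossing edges between the final sets that no permutation can separate. The paper performs the halving slightly differently---scan $\sigma_i$ until $\lceil |A_{i-1}|/2\rceil$ elements of one set or $\lceil |B_{i-1}|/2\rceil$ of the other have appeared, and keep those together with the as-yet-unseen portion of the other set---which sidesteps the small bookkeeping of whether the pivot itself belongs to $A_i$, but your median-pivot version is equally valid once the pivot is retained in $A_i$.
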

\begin{proof}
Let $\F$ be a family of permutations of $V(G)$ that is pairwise suitable for $G$. Let $r = |\F|$. We claim that, for any $\sigma \in \F$, there always exists an $S_1 \subseteq V_1$ and an $S_2 \subseteq V_2$ such that $|S_1| \geq \ceil{ |V_1|/2 }, |S_2| \geq \ceil{ |V_2|/2 }$ and $S_1 \prec_{\sigma} S_2$ or $S_2 \prec_{\sigma} S_1$. To see this, scan $V(G)$ in the order of $\sigma$ till we see $\ceil{|V_1|/2 }$ elements from $V_1$ or $\ceil{|V_2|/2}$ elements of $V_2$, which ever happens earlier. In the former case the first $\ceil {|V_1|/2 }$ elements of $V_1$ precede at  least $\ceil{ |V_2|/2 }$ elements of $V_2$ and in the latter case the first $\ceil{|V_2|/2}$ elements of $V_2$ precede at least $\ceil{|V_1|/2}$ elements of $V_1$. Extending this claim recursively to all permutations in $\F$, we see that there always exist a $T_1 \subseteq V_1$ and a $T_2 \subseteq V_2$ such that $|T_1| \geq |V_1|/2^r, |T_2| \geq |V_2|/2^r$ and $\forall \sigma \in \F$, either $T_1 \prec_{\sigma} T_2$ or $T_2 \prec_{\sigma} T_1$. We now claim that either $|T_1| \leq s_1$ or  $|T_2| \leq s_2$. Suppose, for contradiction, $|T_1| \geq s_1+1$ and  $|T_2| \geq s_2+1$. Then by the statement of the theorem, there exists an edge $e = \{v_1, v_2\}$ of $G$ such that $v_1 \in T_1$ and $v_2 \in T_2$ and a second edge $f$ between $T_1 \setminus \{v_1\}$ and $T_2 \setminus \{v_2\}$. Since $T_1$ and $T_2$ are separated in every permutation of $\F$, no permutation in $\F$ separates the disjoint edges $e$ and $f$ between $T_1$ and $T_2$. This contradicts the fact that $\F$ is a pairwise suitable family for $G$. Hence, either $|V_1| / 2^r \leq |T_1| \leq s_1$ or $|V_2|/2^r \leq |T_2| \leq s_2$ or both. That is, $r \geq \min \left\{ \log \frac{|V_1|}{s_1}, \log \frac{|V_2|}{s_2} \right\}$. 
\qed \end{proof}

The next two corollaries are immediate. 

\begin{corollary}
\label{corollaryCompleteBipartiteLowerBound}
For a complete bipartite graph $K_{m,n}$ with $m \leq n$, $\boxli(K_{m,n}) \geq \log(m)$. 
\end{corollary}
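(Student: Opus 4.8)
The plan is to obtain this as an immediate application of Theorem \ref{theoremBoxliLowerBound} with the natural bipartition of $K_{m,n}$ and the strongest possible choice of the parameters $s_1, s_2$. First I would set $V_1$ to be the part of size $m$ and $V_2$ to be the part of size $n$. Then $V_1 \cap V_2 = \emptyset$ by definition of a bipartition, and since each part is nonempty (as $1 \leq m \leq n$), both $V_1$ and $V_2$ are proper subsets of $V(K_{m,n})$. This verifies the structural preconditions $V_1, V_2 \subsetneq V(G)$ and $V_1 \cap V_2 = \emptyset$ required by the theorem.

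Next I would check the edge-density hypothesis with the extremal choice $s_1 = s_2 = 1$. By the definition of the complete bipartite graph, every vertex of $V_1$ is adjacent to every vertex of $V_2$; in particular, there is an edge between every $1$-subset of $V_1$ and every $1$-subset of $V_2$. Substituting $s_1 = s_2 = 1$, $|V_1| = m$, and $|V_2| = n$ into the conclusion of Theorem \ref{theoremBoxliLowerBound} then yields $\boxli(K_{m,n}) \geq \min\{\log m, \log n\}$, and this minimum equals $\log m$ precisely because $m \leq n$.

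Since the whole argument collapses to a single invocation of the theorem, there is no genuine obstacle here; the only points that need attention are that the hypotheses are truly met (disjointness and properness of the parts, and the all-pairs adjacency that licenses $s_1 = s_2 = 1$). It is worth remarking that taking $s_1 = s_2 = 1$ is the best available choice, since it maximizes the ratios $|V_1|/s_1$ and $|V_2|/s_2$ inside the minimum and hence gives the strongest lower bound that Theorem \ref{theoremBoxliLowerBound} can produce for this graph.
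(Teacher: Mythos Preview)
Your proof is correct and matches the paper's approach exactly: the paper states this corollary as immediate from Theorem~\ref{theoremBoxliLowerBound}, and your choice of the bipartition with $s_1 = s_2 = 1$ is precisely the intended instantiation.
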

\begin{corollary}
\label{corollaryBoxliOmega}
For a graph $G$, 
$$\boxli(G) \geq \log \floor{\frac{\omega(G)}{2}},$$
where $\omega(G)$ is the size of a largest clique in $G$.
\end{corollary}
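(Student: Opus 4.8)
The plan is to reduce to Theorem~\ref{theoremBoxliLowerBound} by exhibiting the right pair of vertex sets inside a largest clique of $G$. Let $\omega = \omega(G)$ and fix a clique $K$ on $\omega$ vertices. I would split $K$ (or enough of it) into two disjoint sets $V_1, V_2 \subseteq V(K)$ with $|V_1| = |V_2| = \floor{\omega/2}$. Since every two vertices of $K$ are adjacent, every vertex of $V_1$ is joined to every vertex of $V_2$; in particular there is an edge between every $1$-subset of $V_1$ and every $1$-subset of $V_2$, so the hypotheses of Theorem~\ref{theoremBoxliLowerBound} are met with $s_1 = s_2 = 1$.

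Applying that theorem then gives
\[
\boxli(G) \geq \min\left\{ \log \frac{|V_1|}{1},\ \log \frac{|V_2|}{1} \right\} = \log \floor{\frac{\omega}{2}},
\]
which is exactly the claimed bound. The choice $s_1 = s_2 = 1$ is the natural specialization for a clique, and it is where the lower-bound machinery becomes sharpest.

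The only point that needs a moment's attention is the requirement $V_1, V_2 \subsetneq V(G)$ in Theorem~\ref{theoremBoxliLowerBound}: each of $V_1, V_2$ has $\floor{\omega/2} < \omega \le |V(G)|$ vertices, so both are proper subsets (assuming $\omega \geq 2$, below which the bound $\log\floor{\omega/2}$ is vacuous). An equivalent and equally short route avoids even this check: the clique $K$ contains a complete bipartite subgraph $K_{\floor{\omega/2}, \ceil{\omega/2}}$, so Corollary~\ref{corollaryCompleteBipartiteLowerBound} together with the monotonicity of $\boxli$ (Observation~\ref{observationPiMonotonicity}) yields $\boxli(G) \geq \boxli(K_{\floor{\omega/2}, \ceil{\omega/2}}) \geq \log\floor{\omega/2}$. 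I expect no genuine obstacle here; the statement is a direct corollary once the clique is partitioned, and both routes are essentially one line beyond the results already established.
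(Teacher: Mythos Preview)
Your proposal is correct and is precisely the intended argument: the paper simply declares both Corollary~\ref{corollaryCompleteBipartiteLowerBound} and Corollary~\ref{corollaryBoxliOmega} ``immediate'' from Theorem~\ref{theoremBoxliLowerBound}, and the immediate specialization is exactly to halve a maximum clique and take $s_1 = s_2 = 1$. Your alternate route via $K_{\floor{\omega/2},\ceil{\omega/2}}$ and monotonicity is equally valid and amounts to the same thing.
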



\subsection{Random graphs}
\label{sectionRandomGraphs}


\begin{definition}[Erd\H{o}s-R\'{e}nyi model]
$\G(n,p)$, $n \in \N$ and $0 \leq p \leq 1$, is the discrete probability space of all simple undirected graphs $G$ on $n$ vertices with each pair of vertices of $G$ being joined by an edge with a probability $p$ independent of the choice for every other pair of vertices. 
\end{definition}

\begin{definition}
A property $P$ is said to hold for $\G(n,p)$ {\em asymptotically almost surely (a.a.s)} if the probability that $P$ holds for $G \in \G(n,p)$ tends to $1$ as $n$ tends to $\infty$.
\end{definition}


\begin{theorem}
\label{theoremBoxliLowerBoundRandom}
For $G \in \G(n,p(n))$ 
$$\boxli(G) \geq \log(np(n)) - \log\log(np(n)) - 2.5 \mbox{ a.a.s}.$$
\end{theorem}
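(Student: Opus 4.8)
The plan is to apply Theorem~\ref{theoremBoxliLowerBound} with a carefully chosen balanced bipartition of the vertex set of $G$, and then show that for $G \in \G(n,p(n))$ the required density condition holds asymptotically almost surely with good enough parameters. Concretely, I would fix an arbitrary partition of $V(G)$ into two disjoint sets $V_1, V_2$ each of size $n/2$ (up to rounding). To invoke Theorem~\ref{theoremBoxliLowerBound} I need parameters $s_1, s_2$ such that every $s_1$-subset of $V_1$ and every $s_2$-subset of $V_2$ has an edge between them; the conclusion will then be $\boxli(G) \geq \min\{\log(|V_1|/s_1), \log(|V_2|/s_2)\}$. Setting $|V_1| = |V_2| = n/2$ and $s_1 = s_2 = s$, this gives $\boxli(G) \geq \log(n/(2s))$, so the whole game is to find the smallest $s$ (as a function of $n$ and $p$) for which the cross-edge condition holds a.a.s.

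The key step is therefore a first-moment / union-bound estimate on the ``bad event'' that some $s$-subset $S_1 \subseteq V_1$ and some $s$-subset $S_2 \subseteq V_2$ have \emph{no} edge between them. For fixed $S_1, S_2$ of size $s$ each, the probability that no edge joins them is $(1-p)^{s^2}$, since there are $s^2$ independent candidate cross-pairs. Taking a union bound over the at most $\binom{n/2}{s}^2 \leq (en/(2s))^{2s}$ choices of the pair $(S_1,S_2)$, the probability of a bad pair is at most
\begin{equation*}
\left(\frac{en}{2s}\right)^{2s}(1-p)^{s^2} \leq \exp\!\left(2s\ln\frac{en}{2s} - p\,s^2\right),
\end{equation*}
using $1-p \leq e^{-p}$. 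I want this to tend to $0$, i.e. I want the exponent to go to $-\infty$, which holds as soon as $p\,s \gg 2\ln(en/(2s))$, roughly $s \geq C\log(np)/p$ for a suitable constant. With $s$ of this order, $\log(n/(2s)) = \log(np) - \log\log(np) - O(1)$, and chasing the constants through (the $2.5$ in the statement absorbs the $\log$-base-$2$ conversions, the factor of $2$ from the halving, the $e$ from the binomial bound, and the constant $C$) yields exactly the claimed bound $\boxli(G) \geq \log(np(n)) - \log\log(np(n)) - 2.5$ a.a.s.

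The main obstacle I anticipate is not the structure of the argument but pinning down the constant $2.5$ with the right value of $s$, since several $O(1)$ contributions must be tracked simultaneously: the base-$2$ logarithm versus natural logarithm in the exponent, the $\binom{n/2}{s}$ versus $\binom{n}{s}$ factor, and the slack needed so the exponent is genuinely negative rather than merely $O(1)$. I would choose $s = \ceil{c\log(np)/p}$ with $c$ tuned so that $p s^2$ dominates $2s\ln(en/(2s))$ by a widening margin as $n \to \infty$, guaranteeing the ``a.a.s.'' rather than just a positive-probability conclusion, and then verify that the resulting $\log(n/(2s))$ still exceeds $\log(np) - \log\log(np) - 2.5$ for all large $n$. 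A secondary technical point is handling the full range of $p = p(n)$: when $np(n)$ is bounded the statement is vacuous or trivial, so I would implicitly assume $np(n) \to \infty$ (the only regime where the right-hand side is meaningful), and I should note the floor/rounding on $n/2$ and on $s$ contributes only to the absorbed constant.
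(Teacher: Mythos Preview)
Your proposal is correct and follows essentially the same approach as the paper: the paper also fixes a balanced bipartition $V_1 \uplus V_2$, applies Theorem~\ref{theoremBoxliLowerBound} with $s_1 = s_2 = s(n) = 2\ln(np(n))/p(n)$, and uses the identical union bound $\left(\frac{ne}{2s}\right)^{2s}\exp(-ps^2)$ to show the bad event has vanishing probability, then verifies $\log\lfloor n/(2s(n))\rfloor \geq \log(np(n)) - \log\log(np(n)) - 2.5$. The paper disposes of the trivial regime by noting that when $np(n) \leq e^{e/4}$ the right-hand side is nonpositive, matching your observation that only $np(n) \to \infty$ is interesting.
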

\begin{proof}
Proof in Appendix \ref{AppSection:theoremBoxliLowerBoundRandom}. 
\qed \end{proof}
Note that the expected average degree of a graph in $\G(n,p)$ is $\mathbb{E}_p[\bar{d}] = (n-1)p$. And hence the above bound can be written as $\log \mathbb{E}_p[\bar{d}] - \log\log \mathbb{E}_p[\bar{d}] - 2.5$.

\section{Separation dimension of hypergraphs}
\label{sectionHypergraphs}
\subsection{Separation dimension and size of a hypergraph}
\label{sectionBoxliSizeHyper}

Using a direct probabilistic argument we obtain the following theorem.  
\begin{theorem}
\label{theoremHypergraphSizeUpperbound}
For any rank-$r$ hypergraph $H$ on $n$ vertices 
$$ \boxli(H) \leq \frac{e \ln 2}{\pi \sqrt{2}} 4^r \sqrt{r} \log n.$$
\end{theorem}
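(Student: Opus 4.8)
The plan is to mimic the probabilistic construction used for Theorem~\ref{theoremBoxliSize}, now sampling permutations of $V(H)$ and estimating the probability that a pair of disjoint edges fails to be separated. I would choose $k$ permutations $\sigma_1,\dots,\sigma_k$ of $V(H)$ independently and uniformly at random, and show that for a suitable $k$ the resulting family is pairwise suitable for $H$ with positive probability; by the first moment method such a family then exists, giving $\boxli(H)\le k$.

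The key local computation is the separation probability of a single random permutation on a fixed pair of disjoint edges $e,f$ with $|e|=a$ and $|f|=b$, where $1\le a,b\le r$. Conditioning on the relative order of the $a+b$ vertices in $e\cup f$, which is uniform, the event $e\prec_{\sigma}f$ has probability $a!\,b!/(a+b)! = 1/\binom{a+b}{a}$, and likewise for $f\prec_{\sigma}e$; hence $\sigma$ separates $e$ and $f$ with probability $2/\binom{a+b}{a}$. Since $\binom{a+b}{a}$ is maximised over $1\le a,b\le r$ at $a=b=r$, every disjoint pair is separated with probability at least $2/\binom{2r}{r}$. For $r=2$ this recovers the value $1/3$ used in Theorem~\ref{theoremBoxliSize}, which is a useful consistency check.

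The remaining steps are a union bound and arithmetic. The probability that no $\sigma_i$ separates a fixed disjoint pair is at most $\bigl(1-2/\binom{2r}{r}\bigr)^k\le \exp(-2k/\binom{2r}{r})$, and the number of pairs of edges is at most $\binom m2< m^2/2$, where the number of edges $m$ of a rank-$r$ hypergraph on $n$ vertices is at most $\sum_{i=1}^r\binom ni$. Requiring the union bound to be strictly below $1$ and solving for $k$ yields $k=O\!\bigl(\binom{2r}{r}\log m\bigr)$, and substituting $\log m = O(r\log n)$ gives a bound of the claimed shape $4^r\sqrt r\log n$.

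The delicate part is pinning down the explicit constant $\tfrac{e\ln 2}{\pi\sqrt2}$. This requires replacing the order-of-magnitude estimates by concrete inequalities: a Stirling-type bound such as $\binom{2r}{r}\le 4^r/\sqrt{\pi r}$ to convert $\binom{2r}{r}$ into $4^r\sqrt r$, an explicit bound on $\sum_{i\le r}\binom ni$ to turn $\log m$ into a clean multiple of $r\log n$, and care with the conversion between $\ln$ and $\log_2$ (recall the paper's $\log$ is base $2$). I expect this bookkeeping, rather than any conceptual difficulty, to be where the work lies; the overall argument is a direct first-moment and union-bound computation once the separation probability $2/\binom{2r}{r}$ is in hand.
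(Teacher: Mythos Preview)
Your proposal is correct and is essentially the paper's own argument: random permutations, separation probability $2/\binom{2r}{r}$ via Stirling, and a union bound over all pairs of edges. The only cosmetic difference is that the paper bounds the number of edges crudely by $n^r$ (so the number of pairs by $n^{2r}$) rather than by $\sum_{i\le r}\binom{n}{i}$, which makes the ``$\log m$ into $r\log n$'' step immediate and, together with the Stirling bounds $\sqrt{2\pi}\,k^{k+1/2}e^{-k}\le k!\le e\,k^{k+1/2}e^{-k}$, yields exactly the constant $\frac{e\ln 2}{\pi\sqrt 2}$.
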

\begin{proof}
Proof in Appendix \ref{AppSection:theoremHypergraphSizeUpperbound}
\qed \end{proof}

\subsubsection*{Tightness of Theorem \ref{theoremHypergraphSizeUpperbound}}

Let $K_n^r$ denote a complete $r$-uniform graph on $n$ vertices. Then by Theorem \ref{theoremHypergraphSizeLowerbound}, $\boxli(K_n^r) \geq \frac{1}{2^7} \frac{4^r}{\sqrt{r-2}} \log n$ for $n$ sufficiently larger than $r$. Hence the bound in Theorem \ref{theoremHypergraphSizeUpperbound} is tight by factor of $64 r$.

\subsection{Maximum Degree}
\label{Section:HyperGraphMaxDeg}
\begin{theorem}
For any rank-$r$ hypergraph $H$ of maximum degree $D$,
$\boxli(H) \leq \order{rD \log^2(rD)}$. 
\end{theorem}
\begin{proof}
This is a direct consequence of the nontrivial fact that $\boxi(G) \in \order{\Delta \log^2 \Delta}$ for any graph $G$ of maximum degree $\Delta$ \cite{DiptAdiga}. 
It is known that there exist graphs of maximum degree $\Delta$ whose boxicity can be as high as $c \Delta \log \Delta$ \cite{DiptAdiga}, where $c$ is a small enough positive constant. Let $G$ be one such graph. Consider the following hypergraph $H$ constructed from $G$. Let $V(H) = E(G)$ and $E(H) = \{E_v : v \in V(G)\}$ where $E_v$ is the set of edges incident on the vertex $v$ in $G$. It is clear that $G = L(H)$. Hence $\boxli(H) = \boxi(G) \geq c \Delta(G) \log \Delta(G)$. 

Note that the rank of $H$ is $r = \Delta(G)$ and the maximum degree of $H$ is $2$. Thus $\boxli(H) \geq c r \log(r)$ and hence the dependence on $r$ in the upper bound  cannot be considerably brought down in general. 
\qed
\end{proof}

\subsection{Lower bound}
\label{sectionLowerBoundHypergraphs}

Now we illustrate one method of extending the above lower bounding technique from graphs to hypergraphs.
Let $K_n^r$ denote the complete $r$-uniform hypergraph on $n$ vertices. We show that the upper bound of 
$\order{4^r \sqrt{r} \log n}$ obtained for $K_n^r$ from Theorem \ref{theoremHypergraphSizeUpperbound} is 
 tight up to a factor of $r$. The lower bound argument below is motivated by an argument used by 
  Radhakrishnan to prove a lower bound on the size of a family of scrambling permutations \cite{radhakrishnan2003note}. From Corollary \ref{corollaryBoxliOmega} we know that the separation dimension 
   of $K_n$, the complete graph on $n$ vertices, is in $\orderatleast{\log n}$. Below we show that  given any
    separating embedding of $K_n^r$ in $\R^d$, the space $\R^d$ contains ${2r-4 \choose r-2}$ orthogonal 
     subspaces such that the projection of the given embedding on to these subspaces gives a separating 
      embedding
     of a $K_{n-2r+4}$. 

\begin{theorem}
\label{theoremHypergraphSizeLowerbound}
Let $K_n^r$ denote the complete $r$-uniform hypergraph on $n$ vertices with $r > 2$. Then 
$$ c_1 \frac{4^r}{\sqrt{r-2}} \log n \leq \boxli(K_n^r) \leq c_2 4^r \sqrt{r} \log n,$$ for $n$ sufficiently larger than $r$ and where $c_1 = \frac{1}{2^7}$ and  $c_2 =  \frac{e\ln2}{\pi\sqrt{2}} < \frac{1}{2}$.
\end{theorem}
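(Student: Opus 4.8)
The upper bound is immediate: since $K_n^r$ is a rank-$r$ hypergraph on $n$ vertices, Theorem~\ref{theoremHypergraphSizeUpperbound} gives $\boxli(K_n^r) \le \frac{e\ln 2}{\pi\sqrt2}\, 4^r\sqrt r\,\log n = c_2 4^r\sqrt r\log n$. So the whole effort lies in the lower bound, and the plan is to implement the reduction sketched just before the statement: from any separating embedding of $K_n^r$ in $\R^d$, where $d=\boxli(K_n^r)$, extract many pairwise orthogonal coordinate subspaces, on each of which the induced embedding separates a complete graph $K_m$ with $m=n-2r+4$, and then invoke the graph bound $\boxli(K_m)\ge\log\floor{m/2}$ from Corollary~\ref{corollaryBoxliOmega}.

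Concretely, I would first fix a set $T$ of $2r-4$ vertices (the ``padding'') and call the remaining $m=n-2r+4$ vertices \emph{free}. A separating embedding may be taken to arise from a pairwise suitable family, so on each of the $d$ axes the vertices occupy distinct positions; thus each axis $i$ orders $T$, and I would record the ordered partition $(A_i,B_i)$, where $A_i$ is the set of the first $r-2$ elements of $T$ on axis $i$ and $B_i$ the last $r-2$. This assigns every axis to exactly one of the $\binom{2r-4}{r-2}$ ordered partitions $(A,B)$ of $T$ into two labelled halves, and I would let $W_{(A,B)}$ be the coordinate subspace spanned by the axes assigned to $(A,B)$. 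Because distinct partitions use disjoint axis sets, the subspaces $W_{(A,B)}$ are pairwise orthogonal and their dimensions sum to $d$.

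The heart of the argument is to show these subspaces separate $K_m$. Given two disjoint pairs $\{a,b\},\{c,d\}$ of free vertices, I would pad them by $A$ and $B$ to form the disjoint $r$-edges $\{a,b\}\cup A$ and $\{c,d\}\cup B$; a separating axis $i$ for this $r$-edge pair forces $A\prec_{\sigma_i}B$ or $B\prec_{\sigma_i}A$, placing $i$ in group $(A,B)$ or in its reverse $(B,A)$, and in either case it separates $\{a,b\}$ from $\{c,d\}$. The one subtlety is orientation: the guaranteed axis may land in the opposite group, so a single ordered group need not separate every pair. I would resolve this by pairing each group $(A,B)$ with its reverse $(B,A)$; repeating the padding argument with the roles of the two pairs exchanged shows that the axes of $W_{(A,B)}\cup W_{(B,A)}$ separate all disjoint pairs of free vertices, hence carry a separating embedding of $K_m$. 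By Corollary~\ref{corollaryBoxliOmega} each such union then contains at least $\log\floor{m/2}$ axes, and summing over the $\tfrac12\binom{2r-4}{r-2}$ unordered partitions (whose axis sets are disjoint) yields $d\ge\tfrac12\binom{2r-4}{r-2}\log\floor{m/2}$.

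Finally I would convert this into the stated form. Using $\binom{2k}{k}\ge 4^k/(2\sqrt k)$ with $k=r-2\ge 1$ gives $\tfrac12\binom{2r-4}{r-2}\ge 4^r/(64\sqrt{r-2})$, and for $n$ sufficiently larger than $r$ one has $\log\floor{m/2}\ge\tfrac12\log n$, so that $d\ge\frac{1}{2^7}\,\frac{4^r}{\sqrt{r-2}}\,\log n$, which is exactly the claimed bound with $c_1=1/2^7$. I expect the main obstacle to be precisely the orientation issue in the previous paragraph, together with checking that the constant $c_1$ survives both the Stirling estimate of $\binom{2r-4}{r-2}$ and the replacement of $\log\floor{m/2}$ by $\log n$; the generous slack built into $1/2^7$ is what makes these two losses affordable.
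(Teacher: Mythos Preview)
Your proposal is correct and follows essentially the same approach as the paper. The paper works directly with a family $\mathcal{S}$ of \emph{unordered} balanced partitions of the padding set $[2r-4]$ (choosing one representative from each complementary pair, so $|\mathcal{S}|=\tfrac12\binom{2r-4}{r-2}$) and partitions $\mathcal{F}$ into sub-families $\mathcal{F}_S$ according to which partition each permutation separates; your route via ordered partitions followed by pairing $(A,B)$ with $(B,A)$ amounts to exactly the same grouping, and your separate ``orientation'' worry and the ``repeating the padding argument'' step are in fact unnecessary, since the first padding already shows the separating axis lies in $W_{(A,B)}\cup W_{(B,A)}$ and separates $\{a,b\}$ from $\{c,d\}$. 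The constants and the final estimate match the paper's.
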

\begin{proof} The upper bound follows from Theorem \ref{theoremHypergraphSizeUpperbound} and so it suffices to prove the lower bound.

\def\S{\mathcal{S}}

Let $\F$ be a family of pairwise suitable permutations for $K_n^r$. Let $\S$ be a maximal family of $(r-2)$-sized subsets of $[2r-4]$ such that if $S \in \S$, then $[2r-4] \setminus S \notin \S$. Hence $|\S| = \frac{1}{2} {2r-4 \choose r-2} \geq 2^{-6} 4^r / \sqrt{r-2}$ (using the fact that $\sqrt{k} {2k \choose k} \geq 2^{2k-1}$). Notice that for any permutation $\sigma \in \F$, if $S \in \S$ and $[2r-4] \setminus S$ are separated in $\sigma$ then no other $S' \in \S$ and $[2r-4] \setminus S'$ are separated in $\sigma$. Hence we partition $\F$ into $|\S|$ (disjoint) sub-families $\{\F_S\}_{S \in \S}$ such that $\sigma \in \F_S$ if and only if $\sigma$ separates $S$ and $[2r-4] \setminus S$. We claim that each $\F_S$ is pairwise suitable for the complete graph on the vertex set $\{2r-3, \ldots, n\}$, i.e, for any distinct $a, b, c, d \in \{2r-3, \ldots, n\}$ there exists some $\sigma \in \F_S$ which separates $\{a, b\}$ from  $\{c, d\}$. This is because the permutation $\sigma \in \F$ which separates the $r$-sets $S \cup \{a,b\}$ from $([2r-4] \setminus S) \cup \{c, d\}$ lies in $\F_S$. Hence by Corollary \ref{corollaryBoxliOmega}, we have $|\F_S| \geq \log \floor{(n - 2r +4)/2}$. Since $\F = \biguplus_{S \in \S} \F_S$, we have $|\F| \geq |\S||\F_S| \geq 2^{-6} \frac{4^r}{\sqrt{r-2}} \log \floor{ (n-2r+4)/2 }$ which is at least $2^{-7} \frac{4^r}{\sqrt{r-2}} \log n$ for $n$ sufficiently larger than $r$.
\qed \end{proof}


\section{Discussion and open problems}
\label{sectionOpenProblems}
Since $\boxli(G)$ is the boxicity of the line graph of $G$, it is interesting to see how it is related to boxicity of $G$ itself. But unlike separation dimension, boxicity is not a monotone parameter. For example the boxicity of $K_n$ is $1$, but deleting a perfect matching from $K_n$, if $n$ is even, blows up its boxicity to $n/2$. Yet we couldn't find any graph $G$ such that $\boxi(G) > 2^{\boxli(G)}$. Hence we are curious about the following question: Does there exist a function $f : \mathbb{N} \into \mathbb{N}$ such that $\boxi(G) \leq f(\boxli(G))$?
Note that the analogous question for $\boxlistar(G)$ has an affirmative answer. If there exists a vertex $v$ of degree $d$ in $G$, then any $3$-mixing family of permutations of $V(G)$ should contain at least $\log d$ different permutations because any single permutation will leave $\ceil{d/2}$ neighbours of $v$ on the same side of $v$. Hence $\log \Delta(G) \leq \boxlistar(G)$. From \cite{DiptAdiga}, we know that $\boxi(G) \in \order{\Delta(G) \log^2 \Delta(G)}$ and hence $\boxi(G) \in \order{2^{\boxlistar(G)} (\boxlistar(G))^2}$.

Another interesting direction of enquiry is to find out the maximum number of hyperedges (edges) possible in a hypergraph (graph) $H$ on $n$ vertices with $\boxli(H) \leq k$. Such an extremal hypergraph $H$, with $\boxli(H) \leq 0$, is seen to be a maximum sized intersecting family of subsets of $[n]$. A similar question for order dimension of a graph has been studied \cite{agnarsson1999maximum,agnarsson2002extremal} and has found applications in ring theory. We can also ask a three dimensional analogue of the question answered by Schnyder's theorem in two dimensions. Given a collection $P$ of non parallel planes in $\R^3$, can we embed a graph $G$ in $\R^3$ so that every pair of disjoint edges is separated by a plane parallel to one in $P$. Then $|P|$ has  to be at least $\boxli(G)$ for this to be possible. This is because the permutations induced by projecting such an embedding onto the normals to the planes in $P$ gives a pairwise suitable family of permutations of $G$ of size $|P|$. Can $|P|$ be upper bounded by a function of $\boxli(G)$?

We know that Theorem \ref{prop:caterpillar} yields a linear time algorithm for recognizing graphs of separation dimension at most $1$. This gives rise to a very natural question. Is it possible to recognize graphs of separation dimension at most $2$ in polynomial time? 
\bibliographystyle{plain}

\newpage
\appendix
\section{Appendix}
\label{sectionAppendix}

\subsection{Proof of Lemma \ref{lemmaBoxliForests}}
\label{AppSection:lemmaBoxliForests}
\begin{proof}
Let $T_1, \ldots , T_r$ be the collection of trees that form $G$.  Convert each tree $T_i$ to an ordered tree by arbitrarily choosing a root vertex for $T_i$ and assigning an arbitrary order to the children of each vertex. Let $\sigma_1, \sigma_2$ be two permutations of $V(G)$ defined as explained below. Consider a vertex $u \in V(T_i)$ and a vertex $v \in V(T_j)$, where $i,j \in [r]$. If $i \neq j$, then $u \prec_{\sigma_1} v \iff i < j$ and $u \prec_{\sigma_2} v \iff i < j$. Otherwise, $u \prec_{\sigma_1} v$ if and only if $u$ precedes $v$ in a preorder traversal of the ordered tree $T_i$ and $u \prec_{\sigma_2} v$ if and only if $u$ precedes $v$ in a postorder traversal of the ordered tree $T_i$. It is left to the reader to verify that $\{\sigma_1, \sigma_2\}$ form  pairwise suitable family of permutations for $G$. 
\qed \end{proof}

\subsection{Proof of Lemma \ref{lemmaMaxPairsParts}}
\label{AppSection:lemmaMaxPairsParts}
\begin{proof}
Let $H$ be a complete graph with $V(H) = \{h_1, \ldots , h_r\}$. Let $\mathcal{M} = \{M_1, \ldots ,M_r \}$ be a collection of matchings of $H$ such that each edge is present in at least one matching $M_i$. It is easy to see that there exists such a collection (Vizing's Theorem on edge colouring - Theorem $5.3.2$ in \cite{Diest}).
 For each $i \in [r]$, let $G_i$ be a subgraph of $G$ such that $V(G_i) = V(G)$ and for a pair of vertices $u \in V_a$, $v \in V_b$, $\{u,v\} \in E(G_i)$ if $a=b$ or $\{h_a,h_b\} \in M_i$. Note that $G_i$ is made of $|M_i|$ disjoint components. Let $\F_i$ be a family of permutations that is pairwise suitable for $G_i$ such that $|\F_i| = \boxli(G_i)$. By Observation \ref{observationDisjointComponents}, we have $|\F_i| \leq \hat{\boxli}(P_G)$. 

From Theorem \ref{theoremBoxliSize}, $\boxlistar(H) \leq 6.84 \log r$. Let $\E$ be a family of permutations that is pairwise suitable and $3$-mixing for $H$ such that $|\E| = \boxlistar(H) \leq 6.84 \log r$. We construct two families of permutations, namely $\F_{r+1}$ and $\F_{r+2}$, of $V(G)$ from $\E$ such that $|\F_{r+1}| = |\F_{r+2}| = |\E|$. Corresponding to each permutation $\sigma \in \E$, we construct $\tau_{\sigma} \in \F_{r+1}$ and $\kappa_{\sigma} \in \F_{r+2}$ as follows. If $h_i \prec_{\sigma} h_j$, then we have $V_i \prec_{\tau_{\sigma}} V_j$ and $V_i \prec_{\kappa_{\sigma}} V_j$. Moreover, for each $i \in [r]$ and for distinct $v,v' \in V_i$, $v \prec_{\tau_{\sigma}} v'\iff v' \prec_{\kappa_{\sigma}} v$. 
\begin{claim}
\label{claimMaxPairsParts}
$\F = \bigcup_{i=1}^{r+2}\F_i$ is a pairwise-suitable family of permutations for $G$. 
\end{claim}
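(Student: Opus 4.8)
The plan is to take an arbitrary pair of disjoint edges $e = \{x,y\}$ and $f = \{z,w\}$ of $G$ and exhibit a permutation in $\F$ that separates them, organising the argument by how the parts of $P_G$ containing $x,y,z,w$ overlap. Write $p(u)$ for the index of the part containing a vertex $u$. The guiding idea is a dichotomy: either $e$ and $f$ both survive as edges of a single $G_i$, in which case the pairwise suitable family $\F_i$ does the job directly; or they do not, in which case the part-ordering families $\F_{r+1}$ and $\F_{r+2}$ built from $\E$ must be used.

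First I would dispose of the easy situation. I claim that whenever at least one of $e,f$ is an intra-part edge, or both are inter-part edges joining the same pair of parts, there is an index $i \in [r]$ with $e,f \in E(G_i)$. Indeed, an intra-part edge lies in every $G_i$, while an inter-part edge joining parts $p,q$ lies in $G_i$ precisely when $\{h_p,h_q\} \in M_i$; since $\mathcal{M}$ covers every edge of $H$, one can choose a matching containing the relevant pair, and in the same-pair case the single matching serves both. As $e$ and $f$ are then disjoint edges of $G_i$, some $\sigma \in \F_i$ separates them. This reduces the problem to the case where $e$ joins parts $\{a,b\}$, $f$ joins parts $\{c,d\}$, each inter-part, with $\{a,b\} \neq \{c,d\}$.

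If the four parts are distinct, then $\{h_a,h_b\}$ and $\{h_c,h_d\}$ are disjoint edges of $H$, so pairwise suitability of $\E$ for $H$ yields a $\sigma \in \E$ separating them; the induced permutation $\tau_\sigma$ orders the whole of $V_a \cup V_b$ before (or after) the whole of $V_c \cup V_d$, hence separates $e$ and $f$. The remaining, and most delicate, case is when the two part-pairs share exactly one part, say $a$, with $x,z \in V_a$, $y \in V_b$, $w \in V_d$ and $a,b,d$ distinct (note $x \neq z$ as $e \cap f = \emptyset$). Here no single $G_i$ can contain both edges, since $\{h_a,h_b\}$ and $\{h_a,h_d\}$ are adjacent in $H$ and cannot lie in a common matching. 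Instead I invoke the $3$-mixing property of $\E$: some $\sigma$ places $h_a$ strictly between $h_b$ and $h_d$, say $h_b \prec_\sigma h_a \prec_\sigma h_d$. Then in both $\tau_\sigma$ and $\kappa_\sigma$ the parts appear in the order $V_b, V_a, V_d$, so $y$ is leftmost, $w$ is rightmost, and the two ``middle'' vertices $x$ (of $e$) and $z$ (of $f$) both lie in $V_a$; the pair is separated exactly when $x \prec z$.

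The main obstacle --- and the reason the construction attaches two permutations to each $\sigma \in \E$ rather than one --- is precisely this last configuration, where separation hinges on the internal order of $x$ and $z$ within the shared part $V_a$. It is resolved by the fact that $\tau_\sigma$ and $\kappa_\sigma$ use mutually reversed orderings inside each part: whichever of $x \prec z$ or $z \prec x$ holds in $\tau_\sigma$, the opposite holds in $\kappa_\sigma$, so exactly one of the two realises $x \prec z$ and thereby lists the four vertices as $y,x,z,w$, giving $e \prec f$. Having exhausted all cases, every pair of disjoint edges of $G$ is separated by some permutation in $\F = \bigcup_{i=1}^{r+2} \F_i$, which is the claim.
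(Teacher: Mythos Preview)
Your proof is correct and follows essentially the same approach as the paper: handle the pairs that can be placed inside a single $G_i$ using $\F_i$, and handle the remaining inter-part pairs with the lifted permutations $\tau_\sigma,\kappa_\sigma$ coming from $\E$ (pairwise suitability for four distinct parts, $3$-mixing together with the reversed intra-part orders for three). Your case split is organised a bit differently---in particular you route the ``one crossing, one non-crossing, three distinct parts'' situation through an $\F_i$ rather than through $\E$ as the paper does---but the underlying argument is the same.
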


We prove the claim by showing that for every pair of non-adjacent edges $e, e' \in E(G)$, there is a $\sigma \in \F$ such that $e \prec_{\sigma} e'$ or $e' \prec_{\sigma} e$. We call an edge $e$ in $G$ a \emph{crossing edge} if there exists distinct $i,j \in [r]$  such that $e$ has its endpoints in $V_i$ and $V_j$. Otherwise $e$ is called a \emph{non-crossing edge}. Consider any two disjoint edges $\{a,b\}, \{c,d\}$ in $G$. Let $a \in V_i, b \in V_j, c \in V_k$ and $d \in V_l$. If $|\{i,j,k,l\}| \leq 2$, then both the edges belong to some $G_p, p \in [r]$ and hence are separated by a permutation in $\F_p$. If $|\{i,j,k,l\}| = 3$, then the two edges are separated by a permutation in $\F_{r+1}$ or $\F_{r+2}$ since $\E$ was $3$-mixing for $H$. If $|\{i,j,k,l\}| = 4$, then the two edges are separated by a permutation in both $\F_{r+1}$ and $\F_{r+2}$ since $\E$ was pairwise suitable for $H$. Details follow.

\setcounter{case}{0}
\begin{case}[both $\{a,b\}$ and $\{c,d\}$ are crossing edges]

If $i,j,k$ and $l$ are distinct then from the definition of $\E$ there exists a permutation $\sigma \in \E$ such that $\{h_i,h_j\} \prec_{\sigma} \{h_k, h_l\}$ or $\{h_k, h_l\} \prec_{\sigma} \{h_i,h_j\}$. Without loss of generality, assume $\{h_i,h_j\} \prec_{\sigma} \{h_k, h_l\}$. Therefore, in the permutations $\tau_{\sigma}$ and $\kappa_{\sigma}$ constructed from $\sigma$, we have $\{a,b\} \prec_{\tau_{\sigma}} \{c,d\}$ and $\{a,b\} \prec_{\kappa_{\sigma}} \{c,d\}$. 

Recall that $\E$ is a pairwise suitable and $3$-mixing family of permutations for $H$. If $i=k$ and $i,j,l$ are distinct, then there exists a permutation $\sigma \in \E$ such that $h_j \prec_{\sigma} h_i \prec_{\sigma} h_l$ or $h_l \prec_{\sigma} h_i \prec_{\sigma} h_j$. Without loss of generality, assume  $h_j \prec_{\sigma} h_i \prec_{\sigma} h_l$. Now it is easy to see that either $\{a,b\} \prec_{\tau_{\sigma}} \{c,d\}$ or $\{a,b\} \prec_{\kappa_{\sigma}} \{c,d\}$. The cases when $i=l, j, k$ are distinct or $i,j=k,l$ are distinct or $i,j=l,k$ are distinct are symmetric to the above case where  $i=k,j,l$ are distinct. 

Consider the case when $i=k,j=l$ are distinct. In this case, both $\{a,b\}$ and $\{c,d\}$ have their endpoints in $V_i$ and $V_j$. Then there exists some $p \in [r]$ such that $\{a,b\}, \{c,d\} \in E(G_p)$. Since $\F_p$ is a pairwise suitable family of permutations for $G_p$ there exists a $\sigma \in F_p$ such that $\{a,b\} \prec_{\sigma} \{c,d\}$ or  $\{c,d\} \prec_{\sigma} \{a,b\}$. The case when $i=l$ and $j=k$ are distinct is similar. 
\end{case}

\begin{case}[only $\{a,b\}$ is a crossing edge] 

Let $a \in V_i, b \in V_j$ and $c,d \in V_k$. If $i,j,k$ are distinct then there exists a permutation $\sigma$ in $\E$ such that either $h_i \prec_{\sigma} h_j \prec_{\sigma} h_k$ or $h_k \prec_{\sigma} h_j \prec_{\sigma} h_i$. Without loss of generality, assume $h_i \prec_{\sigma} h_j \prec_{\sigma} h_k$. Now its easy to see that both $\{a,b\} \prec_{\tau_{\sigma}} \{c,d\}$ and $\{a,b\} \prec_{\kappa_{\sigma}} \{c,d\}$. If $i=k, j$ are distinct then both $\{a,b\}$ and $\{c,d\}$ have their endpoints from $V_i \cup V_j$. Then there exists some $p \in [r]$ such that $\{a,b\}, \{c,d\} \in E(G_p)$. Since $\F_p$ is a pairwise suitable family of permutations for $G_p$ there exists a $\sigma \in \F_p$ such that $\{a,b\} \prec_{\sigma} \{c,d\}$ or  $\{c,d\} \prec_{\sigma} \{a,b\}$. The case when $j=k, i$ are distinct is similar.   
\end{case}

\begin{case}[only $\{c,d\}$ is a crossing edge] 
Similar to the case above. 
\end{case}

\begin{case}[both $\{a,b\}$ and $\{c,d\}$ are non-crossing edges]
Then, for each $p \in [r]$,  $\{a,b\}, \{c,d\} \in E(G_p)$. Since $\F_p$ is a pairwise suitable family of permutations for $G_p$ there exists a $\sigma \in \F_p$ such that $\{a,b\} \prec_{\sigma} \{c,d\}$ or  $\{c,d\} \prec_{\sigma} \{a,b\}$. 
\end{case}

Thus, we prove Claim \ref{claimMaxPairsParts}. Hence, we have $\boxli(G) \leq |\mathcal{F}| = \sum_{i=1}^{r}|F_i| + |F_{r+1}| + |F_{r+2}| \leq \hat{\boxli}(P_G) r + 13.68 \log r$.
\qed \end{proof}

\subsection{Series-parallel graphs of separation dimension $3$}
\label{AppSection:prop:seriesparallel}

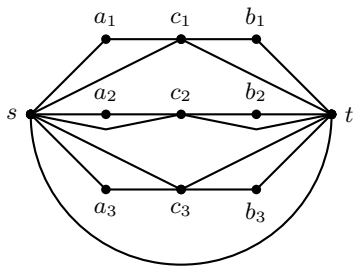
\begin{figure}[!ht]
\begin{center}
\begin{pspicture}(-0.5,-0.5)(4.5,3.5)
	\psline[showpoints=true](0,1)(1,2)(2,2)(3,2)(4,1)
	\psline[showpoints=true](0,1)(1,1)(2,1)(3,1)(4,1)
	\psline[showpoints=true](0,1)(1,0)(2,0)(3,0)(4,1)
	\psline[showpoints=true](0,1)(2,0)(4,1)
	\psline[showpoints=false](0,1)(1,0.8)(2,1)
	\psline[showpoints=false](2,1)(3,0.8)(4,1)
	\psline[showpoints=true](0,1)(2,2)(4,1)
	\psarc[showpoints=false](2,1){2}{180}{0}

	\uput[l](0,1){$s$}
	\uput[90](1,2){$a_1$}
	\uput[90](2,2){$c_1$}
	\uput[90](3,2){$b_1$}
	\uput[90](1,1){$a_2$}
	\uput[90](2,1){$c_2$}
	\uput[90](3,1){$b_2$}
	\uput[270](1,0){$a_3$}
	\uput[270](2,0){$c_3$}
	\uput[270](3,0){$b_3$}
	\uput[r](4,1){$t$}
	
\end{pspicture}
\end{center}
\caption{Series-parallel graph $S$ of separation dimension $3$}
\label{figureGraphS}
\end{figure}

\begin{Proposition}
\label{prop:seriesparallel}
For graph $S$ in Figure \ref{figureGraphS}, $\boxli(S) = 3$. 
\end{Proposition}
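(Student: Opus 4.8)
The plan is to establish $\boxli(S) = 3$ by separately proving the upper bound $\boxli(S) \leq 3$ and the lower bound $\boxli(S) \geq 3$. The upper bound is immediate from Theorem \ref{theoremBoxliPlanar}, since $S$ is a series-parallel graph and hence planar, so $\boxli(S) \leq 3$ follows at once. Thus all the real work lies in showing $\boxli(S) \geq 3$, i.e., that no family of two permutations of $V(S)$ can be pairwise suitable for $S$.

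To prove the lower bound I would argue by contradiction: suppose $\F = \{\sigma_1, \sigma_2\}$ is pairwise suitable for $S$, so that every pair of disjoint edges of $S$ is separated by $\sigma_1$ or by $\sigma_2$. The graph $S$ is deliberately built with three ``parallel'' paths (through $a_i, c_i, b_i$ for $i = 1,2,3$) between the terminals $s$ and $t$, together with extra chords, so as to create many mutually disjoint edge pairs whose simultaneous separation is impossible with only two axes. The key step is to identify a small ``core'' collection of edges—most naturally the three disjoint edges $\{a_1,c_1\}, \{a_2,c_2\}, \{a_3,c_3\}$ (or the analogous triple on the $b$-side, or the edges incident at $s$ and $t$)—and then examine the finitely many disjoint pairs among them and the surrounding edges. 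Since each permutation $\sigma_j$ imposes a linear order, I would track, for each of $\sigma_1$ and $\sigma_2$, the relative positions of the terminals $s, t$ and the middle vertices $c_1, c_2, c_3$, and show by a careful case analysis on these orderings that at least one disjoint edge pair is separated by neither permutation.

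Concretely, I would first record which disjoint pairs of edges exist in $S$ (for example, an edge on one of the three $s$–$t$ paths is disjoint from a suitably chosen edge on another path, and the long chords $\{s, c_2\}, \{c_2, t\}$ and $\{s, c_1\}, \{c_1, t\}$ create further disjoint pairs). Then, fixing $\sigma_1$, I would note that among the three vertices $c_1, c_2, c_3$ one must lie between the other two in $\sigma_1$; the middle one, say $c_m$, cannot be ``peeled off'' cleanly on either side, which forces certain disjoint pairs involving the path edges through $c_m$ to remain unseparated by $\sigma_1$. The same pigeonhole applies to $\sigma_2$. The final step is to show that the set of pairs left unseparated by $\sigma_1$ and the set left unseparated by $\sigma_2$ cannot be made complementary—there is always at least one disjoint pair in the intersection—contradicting pairwise suitability.

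The hard part will be the bookkeeping in this case analysis: one must choose the right small family of disjoint edge pairs so that the $s$–$t$ symmetry and the three-fold symmetry among the paths can be exploited to keep the number of cases manageable, and then verify that every assignment of the two permutations' orderings of $\{s, t, c_1, c_2, c_3\}$ (up to symmetry) leaves some disjoint pair unseparated. I expect the cleanest route is to argue that separating all the relevant pairs would require each permutation to place the $c_i$'s and the terminals in a configuration equivalent to a $K_4$-type obstruction (recall from the proof of Theorem \ref{theoremBoxliPlanar} that $K_4$ already needs three permutations because each permutation separates exactly one of its three disjoint edge pairs), and that $S$ embeds such an obstruction at the level of its disjoint-edge-pair structure. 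Making that reduction precise, rather than brute-forcing all orderings, is the step most likely to require care.
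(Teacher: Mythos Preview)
Your overall framework is fine (the upper bound is immediate from planarity, and the lower bound should come from a contradiction argument tracking the positions of $s,t,c_1,c_2,c_3$ in two hypothetical permutations), but the specific pivot you propose does not do the work, and the ``clean reduction'' you aim for targets the wrong subgraph.

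First, the pigeonhole on which $c_i$ lies \emph{between the other two $c$'s} in $\sigma_1$ carries no useful information by itself: there are no edges among $c_1,c_2,c_3$, so their mutual order does not obstruct separation of any disjoint pair. What matters is where each $c_i$ sits \emph{relative to $s$ and $t$}. The paper's argument rests on two observations you have not isolated: (a) separating the disjoint pair $\{a_i,c_i\}$ and $\{s,t\}$ forces $c_i$ to lie outside the interval spanned by $s,t$ in at least one of the two permutations; and (b) no two of the $c_i$'s can lie on the \emph{same} outside of $s,t$ in the \emph{same} permutation, because then the remaining permutation alone would have to separate all disjoint pairs of the diamond $D$ induced on $\{s,t,c_i,c_j\}$ (edges $st,sc_i,sc_j,tc_i,tc_j$), which is impossible since $\boxli(D)=2$. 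From (a) and (b) a genuine pigeonhole forces one permutation $\sigma$ with $c_i \prec_\sigma s \prec_\sigma t \prec_\sigma c_j$; then the third $c_k$ is squeezed between $s$ and $t$ in $\sigma$ by (b), while (a) pins its location in the other permutation, and one checks that one of the pairs $\{s,c_j\},\{t,c_k\}$ or $\{t,c_i\},\{s,c_k\}$ is separated in neither permutation.

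Second, your hoped-for reduction to a ``$K_4$-type obstruction'' is slightly off: $S$ contains no $K_4$ (there is no edge $c_ic_j$), and the role of $K_4$ in Theorem~\ref{theoremBoxliPlanar} was only to witness tightness, not to supply a reusable lemma. The subgraph that drives the argument is the diamond $D = K_4 - e$ on $\{s,t,c_i,c_j\}$, together with the extra edge $\{s,t\}$ and the pendant edges $\{a_i,c_i\}$; without naming these, a brute-force enumeration of orderings of $\{s,t,c_1,c_2,c_3\}$ is possible but exactly what you said you wanted to avoid.
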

\begin{proof}
Since series-parallel graphs are planar graphs, we know from Theorem \ref{theoremBoxliPlanar} that $\boxli(S) \leq 3$. Assume for contradiction that $\boxli(G) \leq 2$. 
We claim that no two distinct vertices $c_i$ and $c_j$ (where $1 \leq i< j \leq 3$) can together succeed (or together precede) both $s$ and $t$ in any of the permutations. This is because in such a situation the disjoint edges of the diamond graph $D$ (i.e. the graph with $4$ vertices and $5$ edges) induced on $s,~ c_i, ~c_j$ and $t$ will have to be separated in the one remaining permutation which we know is impossible (as $\boxli(D) = 2$). We also claim that every $c_i$ has to succeed (or precede) both $s$ and $t$ in one of the two permutations so as to separate the $\{a_i, c_i\}$ edge from $\{s,t\}$. Since we have only two permutations, by applying the two claims proved above, we can conclude that there exists one permutation, say $\sigma$, such that $c_i \prec_{\sigma}  s \prec_{\sigma} t  \prec_{\sigma} c_j$ or $c_i \prec_{\sigma} t \prec_{\sigma} s  \prec_{\sigma} c_j$, where $i,j \in \{1,2,3\}$ and $i \neq j$. Assume $c_i  \prec_{\sigma} s \prec_{\sigma} t  \prec_{\sigma} c_j$ (the proof is similar in the other case). Let $k \in \{1,2,3\}$ such that $k \neq i$ and $k \neq j$. Note that in $\sigma$, $\{s, c_j\}$ is not separated from $\{t, c_k\}$ and $\{t,c_i\}$ is not separated from $\{s,c_k\}$. From the second claim, we know that either $c_k$ succeeds both $s$ and $t$ in the second permutation or it precedes both $s$ and $t$. In the former case $\{s,c_k\}$ is not separated from $\{t,c_i\}$ in both the permutations. In the latter case $\{t,c_k\}$ is not separated from $\{s,c_j\}$ in both the permutations. 
\qed \end{proof}

\subsection{Proof of Theorem \ref{theoremBoxliLowerBoundRandom}}
\label{AppSection:theoremBoxliLowerBoundRandom}
\begin{proof}

If $np(n) \leq e^{e/4}$, then  $\log(np(n)) - \log\log(np(n)) - 2.5 \leq 0$, and hence the statement is trivially true. So we can assume that $p(n) > e^{e/4} / n$. 

Let $s(n) = 2 \ln (np(n)) / p(n)$. Since $p(n) > e^{e/4} / n$ by assumption, $\ln (np(n)) > e/4$ and hence if $\lim_{n \tends \infty} p(n) = 0$, we get $\lim_{n \tends \infty} s(n) = \infty$. Otherwise, that is when $\liminf_{n \tends \infty} p(n) > 0$, we have $s(n) \geq 2 \ln(np(n)) / 1$ which tends to $\infty$ as $n \tends \infty$. Hence in every case $\lim_{n \tends \infty} s(n) = \infty$.

Let $V(G) = V_1 \uplus V_2$ be a balanced partition of $V(G)$, i.e., $V_1 \cap V_2 = \emptyset$ and $|V_1|, |V_2| \geq \floor{ n/2  }$. $S_1 \subseteq V_1$ and $S_2 \subseteq V_2$ be such that $|S_1| = |S_2| = s(n)$. The probability that there is no edge in $G$ between $S_1$ and $S_2$ is $(1-p(n))^{s(n)^2} \leq \exp(-p(n)s(n)^2)$. Hence the probability $q(n)$ that there exists an $s(n)$-sized set from $V_1$ and one $s(n)$-sized set from $V_2$ with no edge between them is bounded above by ${n/2 \choose s(n)}^2 \exp(-p(n)s(n)^2)$. Hence using the bound ${n \choose k} \leq (ne/k)^k$, we get
\begin{eqnarray*}
q(n) 	& \leq 	& 	\left( \frac{ne}{2s(n)} \right)^{2s(n)} \exp( -p(n)s(n)^2) \\ 
		& = 	&	\exp \left(2s(n) \ln \left( \frac{ne}{2s(n)} \right) - p(n)s(n)^2 \right) \\
		& = 	&	\exp \left(s(n) \left( 2\ln \left( \frac{np(n)e}{4\ln(np(n))}\right)  - 2 \ln(np(n)) \right) \right) \\
		& = 	&	\exp \left(s(n) \left( 2\ln \frac{e}{4}  - 2 \ln\ln(np(n)) \right) \right) \\
		& = 	&	\exp \left(-2s(n)\left(\ln\ln(np(n)) - \ln \frac{e}{4} \right) \right) \\
\end{eqnarray*}

Since $p(n) > e^{e/4} / n$, $\ln\ln(np(n)) > \ln(e/4)$ and since $\lim_{n \tends \infty} s(n) = \infty$, we conclude that  $\lim_{n \tends \infty} q(n) = 0$.

With probability $1 - q(n)$, every pair of subsets from $V_1 \times V_2$ each of size $s(n)$ has at least one edge between them. So by Theorem \ref{theoremBoxliLowerBound}, $\boxli(G) \geq \log \floor{n/2s(n)} \geq \log(np(n)) - \log\log (np(n)) - 2.5$ with probability $1 - q(n)$. Hence the theorem.
\qed \end{proof}

\subsection{Proof of Theorem \ref{theoremHypergraphSizeUpperbound}}
\label{AppSection:theoremHypergraphSizeUpperbound}
\begin{proof}
Consider family $\F$ of $m$ permutations of $[n]$ chosen independently and uniformly from the $n!$ possible ones. For an arbitrary pair of disjoint edges $e, f \in E(H)$, the probability $q$ that $e$ and $f$ are separated in $\sigma$ is at least $2 (r!)^2 / (2r)!$. Using Stirling's bounds $\sqrt{2\pi} k^{k + 1/2} e^{-k} \leq k! \leq e k^{k + 1/2} e^{-k}$, we get $q \geq \frac{2\pi\sqrt{2}}{e} \sqrt{r} / {4^r}$. The probability of the (bad) event that $e$ and $f$ are not separated in any of the $m$ permutations in $\F$ is at most $(1 - q)^m$. Since the number of non-empty edges in $H$ is less than $n^r$, by the union bound, the probability $p$ that there exists some pair of edges which is not separated in any of the permutations in $\F$ is less than $n^{2r}(1-q)^r \leq e^{2r \ln n}e^{-qm}$. Hence if $2r \ln n \leq qm$, then $p < 1$ and there will exist some  family $\F$ of size $m$ such that every pair of edges is separated by some permutation in $\F$. So $m \geq \frac{2r}{q} \ln n$ suffices. So $\boxli(H) \leq \frac{e}{\pi\sqrt{2}} 4^r \sqrt{r} \ln n$. 
\qed \end{proof}

\end{document}